\date{\today}
\newtheorem{theorem}{Theorem}
\newtheorem{lemma}{Lemma}
\newtheorem{proposition}{Proposition}
\theoremstyle{definition}
\newtheorem{example}{Example}
\theoremstyle{remark}
\newtheorem{remark}{Remark}
\numberwithin{equation}{section}
\begin{document}

\title[On topologization of the bicyclic monoid]{On topologization of the bicyclic monoid}

\author[A. Chornenka and O.~Gutik]{Adriana Chornenka and Oleg~Gutik}
\address{Ivan Franko National University of Lviv, Universytetska 1, Lviv, 79000, Ukraine}
\email{adriana.chornenka@lnu.edu.ua, oleg.gutik@lnu.edu.ua, ogutik@gmail.com}

\keywords{Bicyclic monoid, topological semigroup, semitopological semigroup, discrete, Baire space, locally compact, compact, quasi-regular. semiregular.}
\subjclass[2020]{22A15, 54A10, 54D10, 54D30, 54D45.}

\begin{abstract}
We construct two non-discrete inverse semigroup $T_1$-topologies and a compact inverse shift-continuous $T_1$-topology on the bicyclic monoid ${\mathscr{C}}(p,q)$. Also we give  conditions on a $T_1$-topology $\tau$ on ${\mathscr{C}}(p,q)$ to be discrete. In particular, we show that if $\tau$ is an inverse semigroup $T_1$-topology on ${\mathscr{C}}(p,q)$ which satisfies one of the following conditions: $\tau$ is Baire, $\tau$ is quasi-regular or $\tau$ is semiregular, then $\tau$ is discrete.
\end{abstract}

\maketitle


\section{\textbf{Introduction and preliminaries}}

In this paper we shall follow the terminology of \cite{Carruth-Hildebrant-Koch=1983, Carruth-Hildebrant-Koch=1986, Clifford-Preston=1961, Clifford-Preston=1967, Engelking=1989, Lawson=1998, Ruppert=1984}.

\smallskip

If $(X,\tau)$ is a topological space and $Y\subseteq X$, then we mean that  $Y$ is a subspace of $(X,\tau)$ and by $\operatorname{cl}_Y(A)$ and $\operatorname{int}_Y(A)$ we denote the closure and the interior, respectively, of  $A\subseteq Y$ in the topological space $Y$.

\smallskip

A semigroup $S$ is called {\it inverse} if for any
element $x\in S$ there exists a unique $x^{-1}\in S$ such that
$xx^{-1}x=x$ and $x^{-1}xx^{-1}=x^{-1}$. The element $x^{-1}$ is
called the {\it inverse of} $x\in S$. If $S$ is an inverse
semigroup, then the function $\operatorname{inv}\colon S\to S$
which assigns to every element $x$ of $S$ its inverse element
$x^{-1}$ is called the {\it inversion}.
On an inverse semigroup $S$ the semigroup operation  determines the following partial order $\preccurlyeq$: $s\preccurlyeq t$ if and only if there exists $e\in E(S)$ such that $s=te$. This partial order is called the \emph{natural partial order} on $S$.

\smallskip

A (\emph{semi})\emph{topological} \emph{semigroup} is a topological space with a (separately) continuous semigroup operation. An inverse topological semigroup with continuous inversion is called a \emph{topological inverse semigroup}.

\smallskip

A topology $\tau$ on a semigroup $S$ is called:
\begin{itemize}
  \item a \emph{semigroup} topology if $(S,\tau)$ is a topological semigroup;
  \item an \emph{inverse semigroup} topology if $(S,\tau)$ is a topological inverse semigroup;
  \item a \emph{shift-continuous} topology if $(S,\tau)$ is a semitopological semigroup;
  \item an \emph{inverse shift-continuous} topology if $(S,\tau)$ is a semitopological semigroup with continuous inversion.
\end{itemize}

The bicyclic monoid ${\mathscr{C}}(p,q)$ is the semigroup with the identity $1$ generated by two elements $p$ and $q$ subjected only to the condition $pq=1$. The semigroup operation on ${\mathscr{C}}(p,q)$ is determined as
follows:
\begin{equation*}
    q^kp^l\cdot q^mp^n=
    \left\{
      \begin{array}{ll}
        q^{k-l+m}p^n, & \hbox{if~} l<m;\\
        q^kp^n,       & \hbox{if~} l=m;\\
        q^kp^{l-m+n}, & \hbox{if~} l>m.
      \end{array}
    \right.
\end{equation*}
It is well known that the bicyclic monoid ${\mathscr{C}}(p,q)$ is a bisimple (and hence simple) combinatorial $E$-unitary inverse semigroup and every non-trivial congruence on ${\mathscr{C}}(p,q)$ is a group congruence \cite{Clifford-Preston=1961}.

\smallskip

It is well known that topological algebra studies the influence of
topological properties of its objects on their algebraic properties
and the influence of algebraic properties of its objects on their
topological properties. There are two main problems in topological
algebra: the problem of non-discrete topologization and the problem
of embedding into objects with some topological-algebraic
properties.

\smallskip

In mathematical literature the question about non-discrete
(Hausdorff) topologization was posed by Markov \cite{Markov=1945}.
Pontryagin gave well known conditions a base at the unity of a group
for its non-discrete topologization (see Theorem~4.5 of
\cite{Hewitt-Roos=1963} or Theorem~3.9 of \cite{Pontryagin=1966}).
Various authors have refined Markov's question: can a given infinite
group $G$ endowed with a non-discrete group topology be embedded
into a compact topological group? Again, for an arbitrary Abelian
group $G$ the answer is affirmative, but there is a non-Abelian
topological group that cannot be embedded into any compact
topological group ({see Section~9 of \cite{HBSTT-1984}}).

\smallskip

Also, Ol'shanskiy \cite{Olshansky=1980} constructed an infinite
countable group $G$ such that every Hausdorff group topology on $G$
is discrete. Taimanov presented in \cite{Taimanov=1973} a commutative semigroup $\mathfrak{T}$ which admits only discrete Hausdorff semigroup topology and gave in \cite{Taimanov=1975} sufficient conditions on a commutative semigroup to have a non-discrete semigroup topology. In \cite{Gutik=2016} it is proved that each $T_1$-topology with continuous shifts on $\mathfrak{T}$ is discrete.

\smallskip

The bicyclic monoid admits only the discrete semigroup Hausdorff topology \cite{Eberhart-Selden=1969}. Bertman and  West in \cite{Bertman-West=1976} extended this result for the case of Hausdorff semitopological semigroups. If a Hausdorff (semi)topological semigroup $T$ contains the bicyclic monoid ${\mathscr{C}}(p,q)$ as a dense proper semigroup then $T\setminus {\mathscr{C}}(p,q)$ is a closed ideal of $T$ \cite{Eberhart-Selden=1969, Gutik=2015}. Moreover, the closure of ${\mathscr{C}}(p,q)$ in a locally compact topological inverse semigroup can be obtained (up to isomorphism) from ${\mathscr{C}}(p,q)$ by adjoining the additive group of integers in a suitable way \cite{Eberhart-Selden=1969}.

\smallskip

Stable and $\Gamma$-compact topological semigroups do not contain the bicyclic monoid~\cite{Anderson-Hunter-Koch=1965, Hildebrant-Koch=1986, Koch-Wallace=1957}. The problem of embedding the bicyclic monoid into compact-like topological semigroups was studied in \cite{Banakh-Dimitrova-Gutik=2009, Banakh-Dimitrova-Gutik=2010, Bardyla-Ravsky=2020, Gutik-Repovs=2007}.

\smallskip

In this paper we construct two non-discrete inverse semigroup $T_1$-topologies and a compact inverse shift-continuous $T_1$-topologe on the bicyclic monoid ${\mathscr{C}}(p,q)$. Also we give  conditions on a $T_1$-topology $\tau$ on ${\mathscr{C}}(p,q)$ to be discrete. In particular, we show that if $\tau$ is an inverse semigroup $T_1$-topology on ${\mathscr{C}}(p,q)$ which satisfies one of the following conditions: $\tau$ is baire, $\tau$ is quasi-regular or $\tau$ is semiregular, then $\tau$ is discrete.

\smallskip

\section{\textbf{Examples of semigroup non-discrete $T_1$-topologies on the bicyclic monoid}}

In the following two examples we construct non-discrete $T_1$-semigroup inverse topologies on the bicyclic monoid.

\begin{example}\label{example-2.1}
We construct the topology $\tau_1$ on ${\mathscr{C}}(p,q)$ in the following way. For any $q^ip^j\in{\mathscr{C}}(p,q)$ and $n\in\omega$ we denote
\begin{equation*}
  U_n(q^ip^j)=\left\{q^ip^j\right\}\cup\left\{q^sp^t\colon s,t\geqslant n\right\}.
\end{equation*}
Let $\mathscr{B}_1(q^ip^j)=\left\{U_n(q^ip^j)\colon n\in\omega\right\}$ be the system of open neighbourhoods at the point $q^ip^j\in{\mathscr{C}}(p,q)$. It is obvious that the family $\mathscr{B}_1=\displaystyle\bigcup_{i,j\in\omega}\mathscr{B}_1(q^ip^j)$ satisfies the properties (BP1)--(BP3) of \cite{Engelking=1989}, and hence it generates a topology on ${\mathscr{C}}(p,q)$.
\end{example}

\begin{proposition}\label{proposition-2.2}
$\left({\mathscr{C}}(p,q),\tau_1\right)$ is a $T_1$-topological inverse semigroup.
\end{proposition}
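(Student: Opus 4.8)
The plan is to verify three things in turn: that $\tau_1$ is a $T_1$-topology, that the multiplication is jointly continuous, and that the inversion is continuous. Throughout I write $D_n=\{q^sp^t\colon s,t\geqslant n\}$, so that the basic neighbourhood is $U_n(q^ip^j)=\{q^ip^j\}\cup D_n$. The $T_1$ axiom is immediate: given distinct points $q^ip^j$ and $q^kp^l$, choosing $n>\min\{k,l\}$ forces $q^kp^l\notin D_n$, and since $q^kp^l\neq q^ip^j$ we obtain $q^kp^l\notin U_n(q^ip^j)$; thus every singleton is closed.

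The heart of the argument is joint continuity of the semigroup operation, and the key observation is an absorption property of the tails $D_n$. First I would record that $D_m\cdot D_n\subseteq D_N$ whenever $m,n\geqslant N$: writing out the product $q^sp^t\cdot q^{s'}p^{t'}$ with $s,t\geqslant m$ and $s',t'\geqslant n$ in each of the three cases $t<s'$, $t=s'$, $t>s'$ of the multiplication formula, one checks that in every case both exponents of the result are at least $N$ (in the cases $t<s'$ and $t>s'$ one uses that the relevant difference is $\geqslant 1$). Then, fixing $a=q^ip^j$ and $b=q^kp^l$ with product $c=ab$, and an arbitrary basic neighbourhood $U_N(c)=\{c\}\cup D_N$ of $c$, I would choose the neighbourhood indices large enough — concretely $n\geqslant N+j+1$ and $m\geqslant N+k+1$ — and check the remaining products. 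For $n>j$ the product $a\cdot D_n$ falls entirely into the single case $j<s'$, giving elements $q^{i-j+s'}p^{t'}$ whose exponents are both $\geqslant N$, so $a\cdot D_n\subseteq D_N$; symmetrically $m>k$ forces $D_m\cdot b$ into the case $t>k$, giving $D_m\cdot b\subseteq D_N$; and $D_m\cdot D_n\subseteq D_N$ by the absorption property. Since also $a\cdot b=c$, expanding
\[
U_m(a)\cdot U_n(b)=\{c\}\cup(a\cdot D_n)\cup(D_m\cdot b)\cup(D_m\cdot D_n)\subseteq\{c\}\cup D_N=U_N(c)
\]
shows that multiplication is continuous at $(a,b)$.

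Finally, continuity of the inversion is easy once one notes that $D_n$ is invariant under the map $q^sp^t\mapsto q^tp^s$, because the defining condition $s,t\geqslant n$ is symmetric in $s$ and $t$; hence $D_n^{-1}=D_n$. Writing $c^{-1}=q^jp^i$ for the inverse of $c=q^ip^j$, for any basic neighbourhood $U_N(c^{-1})=\{c^{-1}\}\cup D_N$ it suffices to take $n\geqslant N$, since then $U_n(c)^{-1}=\{c^{-1}\}\cup D_n\subseteq\{c^{-1}\}\cup D_N=U_N(c^{-1})$. I expect the main obstacle to be purely organisational — the careful case analysis in the absorption step $D_m\cdot D_n\subseteq D_N$ together with the verification that the exponents produced by $a\cdot D_n$ and $D_m\cdot b$ can be pushed above $N$ by taking the neighbourhood indices sufficiently large relative to $N$ and to the fixed exponents of $a$ and $b$. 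Nothing subtle remains beyond this, because the tails $D_n$ are cofinal and closed under both the operation and the inversion.
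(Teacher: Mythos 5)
Your proposal is correct and follows essentially the same route as the paper's proof: verify the $T_1$ axiom directly, establish separate (indeed joint) continuity of the multiplication through an inclusion of products of basic neighbourhoods into a basic neighbourhood of the product, and deduce continuity of the inversion from the symmetry $D_n^{-1}=D_n$, which is the paper's identity $\left(U_n(q^{i_1}p^{j_1})\right)^{-1}=U_n(q^{j_1}p^{i_1})$. The one point worth flagging is the choice of indices, and here your version is in fact the more careful one. The paper asserts $U_m(q^{i_1}p^{j_1})\cdot U_m(q^{i_2}p^{j_2})\subseteq U_n(q^{i_1}p^{j_1}\cdot q^{i_2}p^{j_2})$ with $m=\max\{2n,i_1,j_1,i_2,j_2\}$, and this particular constant is too small whenever $j_1$ (or $i_2$) exceeds $n$: for instance, with $q^{i_1}p^{j_1}=q^0p^5$, $q^{i_2}p^{j_2}=q^0p^0$ and $n=3$ one gets $m=6$, yet $q^0p^5\cdot q^6p^6=q^1p^6\notin U_3(q^0p^5)$, so the claimed inclusion fails. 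Your additive choice of indices ($n\geqslant N+j+1$ for the neighbourhood of $b$, shifted by the $p$-exponent of $a$, and $m\geqslant N+k+1$ for the neighbourhood of $a$, shifted by the $q$-exponent of $b$) is exactly what is needed; with it, all four pieces of your decomposition of $U_m(a)\cdot U_n(b)$ land in $U_N(c)$, as you check, and the absorption property $D_m\cdot D_n\subseteq D_N$ for $m,n\geqslant N$ handles the tail-times-tail term. So your write-up matches the paper's strategy and, in addition, repairs an (easily fixable) slip in the paper's stated constant.
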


\begin{proof}
It is obvious that $\tau_1$ is a $T_1$-topology on ${\mathscr{C}}(p,q)$.

\smallskip

Fix arbitrary $q^{i_1}p^{j_1},q^{i_2}p^{j_2}\in{\mathscr{C}}(p,q)$ and $n\in\omega$. The definition of the semigroup operation on the bicyclic semigroup ${\mathscr{C}}(p,q)$ and routine calculations imply that
\begin{equation*}
  U_m(q^{i_1}p^{j_1})\cdot U_m(q^{i_2}p^{j_2})\subseteq U_n(q^{i_1}p^{j_1}\cdot q^{i_2}p^{j_2})
\end{equation*}
and
\begin{equation*}
  \left(U_n(q^{i_1}p^{j_1})\right)^{-1}=\left(U_n(q^{j_1}p^{i_1})\right),
\end{equation*}
for $m=\max\left\{2n,i_1,j_1,i_2,j_2\right\}$. This completes the proof of the proposition.
\end{proof}

For the natural partial order $\preccurlyeq$ on the bicyclic semigroup ${\mathscr{C}}(p,q)$ and any $q^ip^j\in{\mathscr{C}}(p,q)$ we denote
\begin{align*}
  {\uparrow_{\preccurlyeq}}q^ip^j         &=\left\{q^sp^t\in{\mathscr{C}}(p,q)\colon q^ip^j\preccurlyeq q^sp^t\right\}; \\
  {\downarrow_{\preccurlyeq}}q^ip^j       &=\left\{q^sp^t\in{\mathscr{C}}(p,q)\colon q^sp^t\preccurlyeq q^ip^j\right\}; \\
  {\updownarrow_{\preccurlyeq}}q^ip^j     &={\uparrow_{\preccurlyeq}}q^ip^j\cup {\downarrow_{\preccurlyeq}}q^ip^j; \\
  {\downarrow_{\preccurlyeq}^\circ}q^ip^j &={\downarrow_{\preccurlyeq}}q^ip^j\setminus\left\{q^ip^j\right\}.
\end{align*}

The following statement describes the natural partial order $\preccurlyeq$ on the bicyclic semigroup ${\mathscr{C}}(p,q)$ and it follows from Lemma~1 of \cite{Gutik-Maksymyk=2016}.

\begin{lemma}\label{lemma-2.3}
Let $q^ip^j$ and $q^sp^t$ be arbitrary elements of the bicyclic semigroup ${\mathscr{C}}(p,q)$. Then the following statements are equivalent:
\begin{enumerate}
  \item[$(i)$]   $q^ip^j\preccurlyeq q^sp^t$;
  \item[$(ii)$]  $i\geqslant s$ and $i-j=s-t$;
  \item[$(iii)$] $j\geqslant t$ and $i-j=s-t$.
\end{enumerate}
\end{lemma}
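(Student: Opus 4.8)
The plan is to reduce everything to the definition of the natural partial order together with the explicit multiplication table of ${\mathscr{C}}(p,q)$. First I would determine the idempotents: solving $q^kp^l\cdot q^kp^l=q^kp^l$ with the three-case multiplication rule forces $k=l$, so that $E({\mathscr{C}}(p,q))=\left\{q^kp^k\colon k\in\omega\right\}$. This is the only structural fact required, and it is immediate from the definition of the operation.

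Next I would unwind the natural partial order. By definition $q^ip^j\preccurlyeq q^sp^t$ means $q^ip^j=q^sp^t\cdot e$ for some idempotent $e=q^kp^k$. Computing the product $q^sp^t\cdot q^kp^k$ by the multiplication rule splits into the cases $t\geqslant k$, which returns $q^sp^t$ unchanged, and $t<k$, which returns $q^{s-t+k}p^k$. Reading off the exponents, this product equals $q^ip^j$ exactly when either $i=s,\ j=t$, or $j=k>t$ and $i=s-t+j$; in both situations one has $j\geqslant t$ together with $i-j=s-t$. This gives the implication $(i)\Rightarrow(iii)$. For the converse I would exhibit a witnessing idempotent directly: assuming $(iii)$, take $e=q^jp^j$ and check by the same computation that $q^sp^t\cdot q^jp^j=q^ip^j$ (the subcase $t<j$ uses $i-j=s-t$, and the subcase $t=j$ forces $i=s$), which yields $(iii)\Rightarrow(i)$.

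Finally, the equivalence $(ii)\Leftrightarrow(iii)$ is purely arithmetic. The common hypothesis $i-j=s-t$ is equivalent to $i-s=j-t$, and under this identity the inequality $i\geqslant s$ holds if and only if $j\geqslant t$.

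I expect no genuine obstacle here; the statement is a direct verification, which is why the author can simply cite Lemma~1 of \cite{Gutik-Maksymyk=2016}. The only place demanding care is the case split in the multiplication rule when evaluating $q^sp^t\cdot q^kp^k$, and the bookkeeping of which exponents must coincide. A careless choice of the witnessing idempotent in the converse direction is the easiest way to stumble, so pinning it down as $e=q^jp^j$ streamlines the argument.
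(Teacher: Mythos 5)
Your proof is correct, and it is genuinely different from what the paper does: the paper offers no argument at all for this lemma, simply deriving it as a special case of Lemma~1 of the cited work of Gutik and Maksymyk on bicyclic extensions of linearly ordered groups. Your route makes the statement self-contained: you correctly identify $E({\mathscr{C}}(p,q))=\{q^kp^k\colon k\in\omega\}$ from the multiplication rule, unwind the paper's definition $s\preccurlyeq t \Leftrightarrow s=te$ for some idempotent $e$, and the case analysis of $q^sp^t\cdot q^kp^k$ (giving $q^sp^t$ when $t\geqslant k$ and $q^{s-t+k}p^k$ when $t<k$) is accurate, as is the choice of the witnessing idempotent $e=q^jp^j$ for the converse and the purely arithmetic equivalence of $(ii)$ and $(iii)$. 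What your approach buys is a short, elementary, verifiable computation in place of an external reference to a more general result; what the citation buys the authors is brevity and consistency with the framework of their earlier paper, where the order structure is worked out for a wider class of semigroups. There is no gap in your argument.
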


The semigroup operation on the bicyclic semigroup ${\mathscr{C}}(p,q)$ and Lemma~\ref{lemma-2.3} imply the following lemma.

\begin{lemma}\label{lemma-2.4}
If $q^ip^j$ and $q^sp^t$ are arbitrary elements of the bicyclic semigroup ${\mathscr{C}}(p,q)$, then
\begin{equation*}
{\updownarrow_{\preccurlyeq}}q^ip^j\cdot {\updownarrow_{\preccurlyeq}}q^sp^t={\updownarrow_{\preccurlyeq}}q^{i+s}p^{j+t}.
\end{equation*}
\end{lemma}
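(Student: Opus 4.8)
The plan is to first invoke Lemma~\ref{lemma-2.3} to put the three sets into a transparent form. According to the equivalences there, an element $q^ap^b$ is comparable to $q^ip^j$ (lying either above or below it in $\preccurlyeq$) precisely when $a-b=i-j$; hence $\updownarrow_{\preccurlyeq}q^ip^j$ is nothing but the ``diagonal'' $\{q^ap^b\colon a-b=i-j\}$, and likewise for the other two sets. Writing $d_1=i-j$ and $d_2=s-t$ and noting that $(i+s)-(j+t)=d_1+d_2$, the assertion becomes: the product of the diagonal determined by $d_1$ with the diagonal determined by $d_2$ equals the diagonal determined by $d_1+d_2$.

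For the inclusion ``$\subseteq$'' I would argue straight from the multiplication table. Take $q^ap^b$ with $a-b=d_1$ and $q^cp^e$ with $c-e=d_2$; the product $q^ap^b\cdot q^cp^e$ equals $q^{a-b+c}p^e$, $q^ap^e$, or $q^ap^{b-c+e}$ according to whether $b<c$, $b=c$, or $b>c$. In each of the three cases a one-line computation gives that the $q$-exponent minus the $p$-exponent of the product equals $(a-b)+(c-e)=d_1+d_2$, so every product lies on the diagonal $d_1+d_2$, i.e. in $\updownarrow_{\preccurlyeq}q^{i+s}p^{j+t}$. This inclusion is the robust half of the statement and is exactly what one needs if these sets are to be used as a multiplicatively well-behaved family.

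For the reverse inclusion ``$\supseteq$'' I would take an arbitrary $q^xp^y$ on the diagonal, so $x-y=d_1+d_2$, and try to factor it. The natural device is the middle case $l=m$ of the product: choosing a meeting exponent $n$ and writing $q^xp^y=q^xp^n\cdot q^np^y$ forces $x-n=d_1$ and $n-y=d_2$, that is $n=x-d_1=y+d_2$, the two values agreeing precisely because $x-y=d_1+d_2$. This yields the desired factorisation as soon as $n\geqslant 0$. I expect the main obstacle to be exactly the control of this nonnegativity: when $x-d_1<0$ --- which occurs near the top of the diagonal, namely for the elements of $\uparrow_{\preccurlyeq}q^{i+s}p^{j+t}$ in the regime $i>j$, $s<t$ --- the meeting-point factorisation is unavailable, and one must check whether the off-diagonal cases $b<c$ or $b>c$ can instead reach such an element while keeping all four exponents in $\omega$. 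Carrying out the reverse inclusion on this upper part of the diagonal, and making sure that no comparable element is missed, is the delicate step I would scrutinise most carefully.
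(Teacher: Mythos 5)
Your inclusion $\subseteq$ is correct and complete, and the diagonal reformulation via Lemma~\ref{lemma-2.3} is the right reading of the statement; for comparison, the paper offers no argument at all, asserting only that the lemma follows from the semigroup operation and Lemma~\ref{lemma-2.3}.

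The reverse inclusion, however, which you flagged as delicate and left open, is not a gap that can be closed: it fails exactly in the regime $i>j$, $s<t$ that you identified, so the lemma as stated is false. Take $i=1$, $j=0$, $s=0$, $t=1$, i.e. $d_1=1$, $d_2=-1$ in your notation. Then ${\updownarrow_{\preccurlyeq}}q^1p^0=\left\{q^{b+1}p^b\colon b\in\omega\right\}$ and ${\updownarrow_{\preccurlyeq}}q^0p^1=\left\{q^cp^{c+1}\colon c\in\omega\right\}$, and all three cases of the multiplication table give
\begin{equation*}
  q^{b+1}p^b\cdot q^cp^{c+1}=q^{\max\{b,c\}+1}p^{\max\{b,c\}+1},
\end{equation*}
whence
\begin{equation*}
  {\updownarrow_{\preccurlyeq}}q^1p^0\cdot{\updownarrow_{\preccurlyeq}}q^0p^1=\left\{q^kp^k\colon k\geqslant 1\right\}
  \subsetneq{\updownarrow_{\preccurlyeq}}q^{1+0}p^{0+1},
\end{equation*}
the identity $q^0p^0$ being unreachable. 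Your own case analysis shows why nothing can repair this: to reach a diagonal element $q^xp^y$ with $x<d_1$ (equivalently $y<-d_2$), uniqueness of the canonical form $q^mp^n$ forces the middle case to use the meeting exponent $n=x-d_1<0$, the case $b<c$ forces the second factor to be $q^{x-d_1}p^{y}$, and the case $b>c$ forces the first factor to be $q^{x}p^{x-d_1}$ --- all impossible in $\omega$. Pushing the same computation through for arbitrary parameters yields
\begin{equation*}
  {\updownarrow_{\preccurlyeq}}q^ip^j\cdot{\updownarrow_{\preccurlyeq}}q^sp^t
  =\left\{q^xp^y\in{\mathscr{C}}(p,q)\colon x-y=d_1+d_2 \hbox{~and~} x\geqslant d_1\right\},
\end{equation*}
which equals ${\updownarrow_{\preccurlyeq}}q^{i+s}p^{j+t}$ if and only if $d_1\leqslant 0$ or $d_2\geqslant 0$. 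So completing the verification you postponed does not finish the proof --- it refutes Lemma~\ref{lemma-2.4}. The statement should either be weakened to the inclusion $\subseteq$ (the half you proved, which is all that the surrounding constructions use; the lemma is never cited elsewhere in the paper) or restricted by the hypothesis that $i\leqslant j$ or $s\geqslant t$.
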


\begin{example}\label{example-2.5}
We construct the topology $\tau_2$ on ${\mathscr{C}}(p,q)$ in the following way. For any $q^ip^j\in{\mathscr{C}}(p,q)$ and any non-negahtive integer $n$ we denote
\begin{equation*}
  O_n(q^ip^j)=\left\{q^ip^j\right\}\cup{\downarrow_{\preccurlyeq}^\circ}q^{i+n}p^{j+n}.
\end{equation*}
Let $\mathscr{B}_2(q^ip^j)=\left\{O_n(q^ip^j)\colon n\in\omega\right\}$ be the system of open neighbourhoods at the point $q^ip^j\in{\mathscr{C}}(p,q)$. It is obvious that the family $\mathscr{B}_2=\displaystyle\bigcup_{i,j\in\omega}\mathscr{B}_2(q^ip^j)$ satisfies the properties (BP1)--(BP3) of \cite{Engelking=1989}, and hence it generates a topology on ${\mathscr{C}}(p,q)$.
\end{example}

\begin{proposition}\label{proposition-2.6}
$\left({\mathscr{C}}(p,q),\tau_2\right)$ is a $T_1$-topological inverse locally compact semigroup.
\end{proposition}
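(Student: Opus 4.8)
The plan is to read everything off the explicit shape of the basic neighbourhoods. By Lemma~\ref{lemma-2.3}, for a fixed value of $i-j$ the set $\{q^sp^t\colon s-t=i-j\}$ is a $\preccurlyeq$-chain linearly ordered by the $q$-exponent $s$, with a larger $s$ meaning a $\preccurlyeq$-smaller element; and $O_n(q^ip^j)$ consists of the single point $q^ip^j$ (at $q$-exponent $i$) together with the tail of this chain at $q$-exponents $\geqslant i+n+1$, so that there is a finite ``gap'' at the $q$-exponents $i+1,\dots,i+n$. I would then establish the four required properties one at a time, the structural description above being the common tool.

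First, $T_1$-separation: I would show that every singleton is closed. Given $q^sp^t\ne q^ip^j$, if $s-t\ne i-j$ then $q^sp^t$ lies off the chain through $q^ip^j$, hence $q^sp^t\notin O_0(q^ip^j)$; if $s-t=i-j$ and $s<i$ then again $q^sp^t\notin O_0(q^ip^j)$, while if $s>i$ then $q^sp^t\notin O_{s-i}(q^ip^j)$. Continuity of inversion is then quick: since inversion is an order automorphism of $(\mathscr{C}(p,q),\preccurlyeq)$ with $(q^ip^j)^{-1}=q^jp^i$, one checks directly that $\left(O_m(q^ip^j)\right)^{-1}=O_m(q^jp^i)=O_m\!\left((q^ip^j)^{-1}\right)$, so inversion carries $\mathscr{B}_2$ bijectively onto itself and is a homeomorphism.

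The heart of the argument is joint continuity of the multiplication, where I would prove that $O_k(a)\cdot O_l(b)\subseteq O_n(ab)$ for $a=q^{i_1}p^{j_1}$, $b=q^{i_2}p^{j_2}$ and all sufficiently large $k,l$. The computational engine is the formula, read straight from the definition of the product, that if $x=q^{s_1}p^{t_1}$ lies on the chain of $a$ (so $t_1=s_1-i_1+j_1$) and $y=q^{s_2}p^{t_2}$ is arbitrary, then the $q$-exponent of $xy$ equals $\max\{s_1,\,s_2+i_1-j_1\}$; in particular the $q$-exponent $L$ of $ab$ is $\max\{i_1,\,i_2+i_1-j_1\}$, and by Lemma~\ref{lemma-2.4} every such product stays on the chain of $ab$. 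The main obstacle, and the reason one must take $k,l$ large rather than arbitrary, is exactly the finite gap of $O_n(ab)$: a nontrivial product could in principle land at a $q$-exponent strictly between $L$ and $L+n+1$ and so miss $O_n(ab)$. Running through the three nontrivial cases ($x=a$ with $y$ in the tail of $b$; $x$ in the tail of $a$ with $y=b$; and both in the tails) with the displayed formula shows that once $k,l$ exceed a bound depending only on $n$ and $i_1,j_1,i_2,j_2$, each such product either equals $ab$ or has $q$-exponent $\geqslant L+n+1$, hence lies in $O_n(ab)$.

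Finally, local compactness: here I would show that each basic neighbourhood $O_n(q^ip^j)$ is itself compact. In any open cover, the member $U$ containing the peak $q^ip^j$ must contain a set $O_r(q^ip^j)\cap O_n(q^ip^j)$, that is, all of $O_n(q^ip^j)$ except the finitely many points at $q$-exponents $i+n+1,\dots,i+r$, and these are covered by finitely many further members; hence $O_n(q^ip^j)$ is compact and every point has a compact neighbourhood. Thus the only genuinely delicate step is the continuity of multiplication, and within it the verification that enlarging the neighbourhoods of the two factors pushes every nontrivial product past the gap of the target neighbourhood.
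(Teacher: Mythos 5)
Your proof is correct, and at the level of strategy it is the same as the paper's: check that inversion permutes the basic neighbourhoods, that products of sufficiently shrunk basic neighbourhoods fall into a prescribed basic neighbourhood of the product, and that the chain neighbourhoods are compact because any two of them around the same point differ in only finitely many points. The two genuine differences are both in your favour. For local compactness the paper uses the compact set ${\updownarrow_{\preccurlyeq}}q^ip^j=\operatorname{cl}_{({\mathscr{C}}(p,q),\tau_2)}\bigl(O_n(q^ip^j)\bigr)$ as a compact neighbourhood of $q^ip^j$, while you show that each basic set $O_n(q^ip^j)$ is itself compact; the underlying cofinite-tail argument is identical. More substantively, for continuity of multiplication the paper asserts $O_m(q^{i_1}p^{j_1})\cdot O_m(q^{i_2}p^{j_2})\subseteq O_n(q^{i_1}p^{j_1}\cdot q^{i_2}p^{j_2})$ with the explicit constant $m=\max\{2n,i_1,j_1,i_2,j_2\}$, and this constant is in fact too small: for $a=p^{10}$, $b=1$, $n=1$ it gives $m=10$, yet $p^{10}\cdot q^{11}p^{11}=q^{1}p^{11}$ lies in $O_{10}(a)\cdot O_{10}(b)$ but falls into the gap of $O_1(ab)=\{p^{10}\}\cup\{q^sp^{s+10}\colon s\geqslant 2\}$. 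Your formulation --- that the inclusion holds once $k,l$ exceed a bound depending on $n$ and on $i_1,j_1,i_2,j_2$ --- is exactly what the gap analysis supports (for instance $k,l\geqslant n+\max\{i_1,j_1,i_2,j_2\}$ works), so on this step your version, not the paper's, is the one to keep.
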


\begin{proof}
It is obvious that $\tau_2$ is a $T_1$-topology on ${\mathscr{C}}(p,q)$. Also, simple verifications show that  for each $q^ip^j\in{\mathscr{C}}(p,q)$ and any open basic neighbourhood $O_n(q^ip^j)$ of $q^ip^j$ we have that the set ${\updownarrow_{\preccurlyeq}}q^ip^j\setminus O_n(q^ip^j)$ is finite and
\begin{equation*}
\operatorname{cl}_{\left({\mathscr{C}}(p,q),\tau_2\right)}(O_n(q^ip^j))={\updownarrow_{\preccurlyeq}}q^ip^j.
\end{equation*}
This implies that the space ${\updownarrow_{\preccurlyeq}}q^ip^j$ is compact and hence $\left({\mathscr{C}}(p,q),\tau_2\right)$ is locally compact.

\smallskip

Fix arbitrary $q^{i_1}p^{j_1},q^{i_2}p^{j_2}\in{\mathscr{C}}(p,q)$ and $n\in\omega$. The definition of the semigroup operation on the bicyclic semigroup ${\mathscr{C}}(p,q)$ and routine calculations imply that
\begin{equation*}
  O_m(q^{i_1}p^{j_1})\cdot O_m(q^{i_2}p^{j_2})\subseteq O_n(q^{i_1}p^{j_1}\cdot q^{i_2}p^{j_2})
\end{equation*}
and
\begin{equation*}
  \left(O_n(q^{i_1}p^{j_1})\right)^{-1}=\left(O_n(q^{j_1}p^{i_1})\right),
\end{equation*}
for $m=\max\left\{2n,i_1,j_1,i_2,j_2\right\}$, which completes the proof of the proposition.
\end{proof}

The following example shows that the bicyclic semigroup ${\mathscr{C}}(p,q)$ admits inverse shift-continuous compact $T_1$-topology.

\begin{example}\label{example-2.7}
We construct the topology $\tau_{\mathrm{c}}$ on ${\mathscr{C}}(p,q)$ in the following way. For any non-negahtive integer $n$ we denote
\begin{equation*}
  C_n=\left\{q^ip^j\in{\mathscr{C}}(p,q)\colon i,j\leqslant n\right\}.
\end{equation*}
Let
\begin{equation*}
\mathscr{B}_{\mathrm{c}}(q^ip^j)=\left\{W_n(q^ip^j)=\left\{q^ip^j\right\}\cup{\mathscr{C}}(p,q)\setminus C_n\colon n\in\omega\right\}
\end{equation*}
be the system of open neighbourhoods at the point $q^ip^j\in{\mathscr{C}}(p,q)$. It is obvious that the family $\mathscr{B}_{\mathrm{c}}=\displaystyle\bigcup_{i,j\in\omega}\mathscr{B}_{\mathrm{c}}(q^ip^j)$ satisfies the properties (BP1)--(BP3) of \cite{Engelking=1989}, and hence it generates the topology $\tau_{\mathrm{c}}$ on ${\mathscr{C}}(p,q)$.
\end{example}

\begin{proposition}\label{proposition-2.8}
$\tau_{\mathrm{c}}$ is an inverse shift-continuous compact  $T_1$-topology  on ${\mathscr{C}}(p,q)$.
\end{proposition}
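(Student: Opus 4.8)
The plan is to verify four properties of $\tau_{\mathrm{c}}$ in turn: the $T_1$ separation axiom, compactness, separate continuity of the semigroup operation, and continuity of the inversion. The underlying structure to exploit throughout is that each basic neighbourhood $W_n(q^ip^j)$ is the union of the singleton $\{q^ip^j\}$ with the cofinite set ${\mathscr{C}}(p,q)\setminus C_n$, where $C_n$ is a finite ``corner'' square. Thus every nonempty open set is cofinite, and the topology is essentially the cofinite topology enriched so that each point has a distinguished neighbourhood basis. I expect the $T_1$ and compactness parts to be quick, while the shift-continuity will require the most careful bookkeeping.

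First I would dispose of the $T_1$ axiom. Given two distinct points $q^ip^j$ and $q^sp^t$, I would choose $n$ large enough that $q^sp^t\in C_n$; then $W_n(q^ip^j)$ contains $q^ip^j$ but not $q^sp^t$, and symmetrically. Since each $W_n(q^ip^j)$ omits all but finitely many points of $C_n$ (all except the centre point), singletons are closed, giving $T_1$. For compactness, I would take an arbitrary open cover $\mathscr{U}$ and pick any single member $U\in\mathscr{U}$ containing some point; because $U$ is open and nonempty it contains a basic set $W_n(q^ip^j)$, whose complement ${\mathscr{C}}(p,q)\setminus W_n(q^ip^j)\subseteq C_n\cup(\text{finite})$ is finite. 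Covering those finitely many remaining points by finitely many further members of $\mathscr{U}$ yields a finite subcover.

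The main work is separate continuity of multiplication. Fixing $a=q^{i_1}p^{j_1}$ and a target neighbourhood $W_n(a\cdot b)$ of the product with $b=q^{i_2}p^{j_2}$, I would show that left translation $x\mapsto a\cdot x$ is continuous at $b$, and symmetrically for right translation. The key combinatorial fact is that for the cofinite part, if $x=q^sp^t$ ranges over ${\mathscr{C}}(p,q)\setminus C_N$ with $N$ large, then $a\cdot x$ lands in ${\mathscr{C}}(p,q)\setminus C_n$ except for finitely many exceptions, because multiplying by a fixed element on the left shifts indices by a bounded amount; concretely the product $q^{i_1}p^{j_1}\cdot q^sp^t$ has both exponents large once $s,t$ are large. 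Hence choosing $N$ large enough forces $a\cdot W_N(b)\subseteq W_n(a\cdot b)$, possibly after enlarging $N$ to absorb the finitely many values of $x\in C_N\setminus\{b\}$ whose images might fall in $C_n$; since those are finitely many, one simply takes $N$ beyond them. I would present this as a routine estimate using the explicit multiplication formula and Lemma~\ref{lemma-2.3}, identifying the worst-case bound on how far the corner $C_n$ can be pulled back under a fixed translation. The hard part will be handling the exceptional points inside $C_N$ cleanly: one must confirm that only finitely many $x$ produce a product inside $C_n$, so that a single sufficiently large $N$ works.

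Finally, continuity of inversion follows from the identity $\left(W_n(q^ip^j)\right)^{-1}=W_n(q^jp^i)$, which I would verify directly: inversion sends $q^sp^t$ to $q^tp^s$, hence maps $C_n$ bijectively onto itself (since the defining condition $i,j\leqslant n$ is symmetric in the two exponents) and maps the centre $q^ip^j$ to $q^jp^i$. Thus inversion is a self-homeomorphism carrying basic neighbourhoods to basic neighbourhoods, and in particular is continuous. Combining the four verifications establishes that $\tau_{\mathrm{c}}$ is an inverse shift-continuous compact $T_1$-topology on ${\mathscr{C}}(p,q)$, completing the proof.
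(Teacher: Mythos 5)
Your proposal is correct and follows essentially the same route as the paper: verify $T_1$ and compactness from the cofiniteness of basic open sets, obtain continuity of inversion from the identity $\left(W_n(q^ip^j)\right)^{-1}=W_n(q^jp^i)$, and prove separate continuity by a direct computation with the multiplication formula, enlarging the index of the basic neighbourhood so that products of far-out points land outside $C_n$. The only slight imprecision is your phrase that the product ``has both exponents large once $s,t$ are large'' (a point outside $C_N$ need only have \emph{one} exponent exceeding $N$), but your fallback argument --- that only finitely many $x$ satisfy $a\cdot x\in C_n$, so a sufficiently large $N$ absorbs all exceptions --- is exactly what the paper's explicit case analysis establishes with its choice $m\geqslant\max\{i,j,k,l\}$ and the inclusion $q^ip^j\cdot W_{2m}(q^kp^l)\subseteq W_m(q^ip^j\cdot q^kp^l)$.
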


\begin{proof}
It is obvious that $\tau_{\mathrm{c}}$ is a $T_1$-topology on ${\mathscr{C}}(p,q)$.  Since any basic open set is co-finite in $\left({\mathscr{C}}(p,q),\tau_{\mathrm{c}}\right)$, the space $\left({\mathscr{C}}(p,q),\tau_{\mathrm{c}}\right)$ is compact.

\smallskip

Since $\left(W_n(q^ip^j)\right)^{-1}=W_n(q^jp^i)$, the inversion is continuous in $\left({\mathscr{C}}(p,q),\tau_{\mathrm{c}}\right)$.

\smallskip

Fix arbitrary $ q^ip^j, q^kp^l\in {\mathscr{C}}(p,q)$.
Let $m\geqslant \max\left\{i,j,k,l\right\}$. By the definition of the semigroup operation in ${\mathscr{C}}(p,q)$ we get that  the following equalities hold
\begin{equation*}
  q^ip^j\cdot q^sp^t=
  \left\{
    \begin{array}{ll}
      q^{i-j+s}p^t, & \hbox{if~} 0\leqslant j\leqslant s\leqslant m \hbox{~and~} t >m;\\
      q^ip^{j-s+t}, & \hbox{if~} j\geqslant s, \, 0\leqslant s\leqslant m \hbox{~and~} t >m;\\
      q^{i-j+s}p^t, & \hbox{if~} s> m \hbox{~and~} t\in\omega
    \end{array}
  \right.
\end{equation*}
and
\begin{equation*}
 q^sp^t\cdot q^ip^j=
  \left\{
    \begin{array}{ll}
      q^sp^{t-i+j}, & \hbox{if~} 0\leqslant m \hbox{~and~} t >m;\\
      q^{s-t+i}p^j, & \hbox{if~} 0\leqslant t\leqslant i \hbox{~and~} s>m;\\
      q^sp^{t-i+j}, & \hbox{if~} t>i \hbox{~and~} s>m,
    \end{array}
  \right.
\end{equation*}
which imply that
\begin{equation*}
  q^ip^j\cdot W_{2m}(q^kp^l)\subseteq W_{m}(q^ip^j\cdot q^kp^l)
\end{equation*}
and
\begin{equation*}
  W_{2m}(q^kp^l)\cdot q^ip^j\subseteq W_{m}(q^kp^l\cdot q^ip^j),
\end{equation*}
respectively, and hence $\tau_{\mathrm{c}}$ is a shift-continuous  $T_1$-topology  on ${\mathscr{C}}(p,q)$.
\end{proof}

\section{\textbf{When a $T_1$-topology on the bicyclic monoid is discrete?}}

Next we shall study topological properties $\mathscr{P}$ such that if a $T_1$-topological space $({\mathscr{C}}(p,q),\tau)$ has property $\mathscr{P}$ and $\tau$ is a shift-continuous (semigroup, inverse semigroup) topology on ${\mathscr{C}}(p,q)$, then $\tau$ is discrete. The first such $\mathscr{P}$-property is the property to be a Baire space.

\smallskip

We recall that a topological space $X$ is said to be  \emph{Baire} if for each sequence $A_1, A_2,\ldots, A_i,\ldots$ of dense open subsets of $X$ the intersection $\displaystyle\bigcap_{i=1}^\infty A_i$ is a dense subset of $X$ \cite{Haworth-McCoy=1977}.

\begin{remark}\label{remark-2.7}
The topological space $\left({\mathscr{C}}(p,q),\tau_2\right)$ is not Baire, because $\left({\mathscr{C}}(p,q),\tau_2\right)$ has no an isolated point in itself (see Proposition~1.30 in \cite{Haworth-McCoy=1977}). But $\left({\mathscr{C}}(p,q),\tau_2\right)$ is a locally compact space. Indeed, the set ${\updownarrow_{\preccurlyeq}}q^ip^j$ is compact for any $q^ip^j\in {\mathscr{C}}(p,q)$, because the set ${\updownarrow_{\preccurlyeq}}q^ip^j\setminus O_n(q^ip^j)$ is finite for all $O_n(q^ip^j)\in \mathscr{B}_2(q^ip^j)$. Moreover, for any $O_n(q^ip^j)\in \mathscr{B}_2(q^ip^j)$ we have that $\operatorname{cl}_{\left({\mathscr{C}}(p,q),\tau_2\right)}(O_n(q^ip^j))={\updownarrow_{\preccurlyeq}}q^ip^j$.
\end{remark}

\begin{theorem}\label{theorem--2.8}
Every shift-continuous Baire $T_1$-topology $\tau$ on the bicyclic monoid ${\mathscr{C}}(p,q)$ is discrete.
\end{theorem}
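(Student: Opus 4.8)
The plan is to combine the standard Baire-category argument for countable spaces with the explicit behaviour of the one-sided translations on $\mathscr{C}(p,q)$. First I would observe that $\mathscr{C}(p,q)$ is countable, so if $(\mathscr{C}(p,q),\tau)$ had no isolated point, then each singleton $\{x\}$ would be closed (by $T_1$) with empty interior, hence nowhere dense; writing $\mathscr{C}(p,q)=\{x_1,x_2,\dots\}$, the sets $\mathscr{C}(p,q)\setminus\{x_n\}$ would be dense and open with empty intersection, contradicting the Baire property. Thus some point $q^ap^b$ is isolated, i.e. $\{q^ap^b\}$ is open.

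The remaining task is to propagate isolation to every point, using only separate continuity: for each $c\in\mathscr{C}(p,q)$ the left and right translations $\lambda_c(x)=cx$ and $\rho_c(x)=xc$ are continuous, so the preimage of an open singleton under any translation is open. A routine calculation with the multiplication table gives $\lambda_q(q^ip^j)=q^{i+1}p^j$ and $\rho_p(q^ip^j)=q^ip^{j+1}$, both injective; hence $\lambda_q^{-1}(\{q^ip^j\})$ and $\rho_p^{-1}(\{q^ip^j\})$ are singletons (or empty), which lets me lower the exponents step by step and conclude that the identity $1=q^0p^0$ is isolated. Next, $\lambda_p^{-1}(\{1\})=\{q\}$ and $\rho_q^{-1}(\{1\})=\{p\}$ are singletons, and for $e\geqslant1$ one computes $\lambda_p^{-1}(\{q^ep^0\})=\{q^{e+1}p^0\}$, so by induction every $q^np^0$ is isolated.

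Finally I would climb the columns. For fixed $i\geqslant1$ the element $q^ip^0$ is already isolated, and here the translation $\rho_q$ fails to be injective: $\rho_q^{-1}(\{q^ip^0\})=\{q^ip^1,\,q^{i-1}p^0\}$ is a two-point open set. This is where the main obstacle lies, and the $T_1$ axiom resolves it: since $q^{i-1}p^0$ is already isolated, $\{q^{i-1}p^0\}$ is closed, so removing it leaves $\{q^ip^1\}$ open, whence $q^ip^1$ is isolated. With the exponent now positive, $\rho_q^{-1}(\{q^ip^f\})=\{q^ip^{f+1}\}$ is a singleton for every $f\geqslant1$, and induction gives that all $q^ip^j$ are isolated (the column $i=0$ being handled in the same way starting from $p$). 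Therefore every singleton is open and $\tau$ is discrete. The only delicate point throughout is the failure of injectivity of the ``raising'' translations at the boundary exponents, which is exactly what the ``open minus closed is open'' trick in a $T_1$ space is designed to overcome.
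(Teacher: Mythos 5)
Your proof is correct, but the way you spread isolation through the monoid is genuinely different from the paper's. The opening move coincides: the paper invokes Proposition 1.30 of Haworth--McCoy to extract an isolated point $q^ip^j$ from the Baire property, while you prove the same fact directly by the standard category argument for countable $T_1$ spaces (singletons are nowhere dense, their complements are dense open, their intersection is empty). The propagation step is where the routes split. The paper isolates an arbitrary point $q^mp^n$ in a single stroke: since $q^ip^m\cdot q^mp^n\cdot q^np^j=q^ip^j$, separate continuity gives an open $U(q^mp^n)$ with $q^ip^m\cdot U(q^mp^n)\cdot q^np^j\subseteq\{q^ip^j\}$, and Lemma~I.1 of Eberhart--Selden (the equations $A\cdot X=B$ and $X\cdot C=D$ have only finitely many solutions in ${\mathscr{C}}(p,q)$) forces $U(q^mp^n)$ to be finite, so $T_1$ finishes. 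You instead run an induction over exponents using only translations by the generators: injectivity of $\lambda_q$ and $\rho_p$ lowers the isolated point to the identity, $\lambda_p$ climbs the column $j=0$, and $\rho_q$ climbs each row, with the single non-injective spot $\rho_q^{-1}(\{q^ip^0\})=\{q^{i-1}p^0,\,q^ip^1\}$ resolved by the ``open minus closed'' trick. What each approach buys: yours is completely self-contained, needing nothing beyond the multiplication table, at the cost of a longer case-by-case computation; the paper's is uniform over all points and much shorter, at the cost of citing an external finiteness lemma. One cosmetic remark: at the branch point you do not need that $q^{i-1}p^0$ is already isolated, only that the singleton $\{q^{i-1}p^0\}$ is closed, which is automatic in any $T_1$ space.
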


\begin{proof}
By Proposition 1.30 of \cite{Haworth-McCoy=1977} the space $({\mathscr{C}}(p,q),\tau)$ has an isolated point $q^ip^j$. Then for an arbitrary point $q^mp^n$ in $({\mathscr{C}}(p,q),\tau)$ the separate continuity of the semigroup operation in $({\mathscr{C}}(p,q),\tau)$ implies that there exists an open neighbourhood $U(q^mp^n)$ of $q^mp^n$ in $({\mathscr{C}}(p,q),\tau)$ such that
\begin{equation*}
  q^ip^m\cdot U(q^mp^n)\cdot q^np^j\subseteq \left\{q^ip^j\right\}.
\end{equation*}
By Lemma~I.1 of \cite{Eberhart-Selden=1969} the equations $A\cdot X=B$ and $X\cdot C=D$ have only finite sets of solutions in ${\mathscr{C}}(p,q)$, and hence the set $U(q^mp^n)$ is finite. Since $\tau$ is a $T_1$-topology, the point $q^mp^n$ is isolated in $({\mathscr{C}}(p,q),\tau)$. This completes the proof of the theorem.
\end{proof}

\begin{lemma}\label{lemma-2.9}
Let $\tau$ be a shift-continuous $T_1$-topology on the bicyclic monoid ${\mathscr{C}}(p,q)$ such that the maps ${\mathscr{C}}(p,q)\to E({\mathscr{C}}(p,q))$, $x\mapsto xx^{-1}$ and ${\mathscr{C}}(p,q)\to E({\mathscr{C}}(p,q))$, $x\mapsto x^{-1}x$ are continuous. If for some idempotent $q^ip^i\in {\mathscr{C}}(p,q)$, $i\in\omega$, there exists an open neighbourhood $U(q^ip^i)$ of  $q^ip^i$ in $({\mathscr{C}}(p,q),\tau)$ such that the set $U(q^ip^i)\cap E({\mathscr{C}}(p,q))$ is finite, then $\tau$ is discrete.
\end{lemma}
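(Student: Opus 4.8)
The plan is to first show that the single idempotent $q^ip^i$ is isolated in $({\mathscr{C}}(p,q),\tau)$, and then to bootstrap from one isolated point to discreteness exactly as in the proof of Theorem~\ref{theorem--2.8}.

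First I would shrink the given neighbourhood. Since $\tau$ is $T_1$, the finite set $\left(U(q^ip^i)\cap E({\mathscr{C}}(p,q))\right)\setminus\{q^ip^i\}$ is closed, so $V=U(q^ip^i)\setminus\left(\left(U(q^ip^i)\cap E({\mathscr{C}}(p,q))\right)\setminus\{q^ip^i\}\right)$ is an open neighbourhood of $q^ip^i$ with $V\cap E({\mathscr{C}}(p,q))=\{q^ip^i\}$. Hence $\{q^ip^i\}$ is open in the subspace $E({\mathscr{C}}(p,q))$, i.e. $q^ip^i$ is isolated in $E({\mathscr{C}}(p,q))$.

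Next I would exploit the two continuous retractions onto the band of idempotents. A direct computation with the multiplication gives $q^sp^t\,(q^sp^t)^{-1}=q^sp^s$ and $(q^sp^t)^{-1}\,q^sp^t=q^tp^t$, so the preimage of the (now open) singleton $\{q^ip^i\}$ under the map $x\mapsto xx^{-1}$ equals the $\mathscr{R}$-class $\{q^ip^t\colon t\in\omega\}$, while its preimage under $x\mapsto x^{-1}x$ equals the $\mathscr{L}$-class $\{q^sp^i\colon s\in\omega\}$. By the assumed continuity of these two maps both classes are open in $({\mathscr{C}}(p,q),\tau)$; their intersection is $\{q^ip^i\}$ (this is just the $\mathscr{H}$-class of $q^ip^i$, a singleton because ${\mathscr{C}}(p,q)$ is combinatorial), which is therefore open. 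Thus $q^ip^i$ is an isolated point of $({\mathscr{C}}(p,q),\tau)$.

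Finally I would run the argument of Theorem~\ref{theorem--2.8}. For an arbitrary $q^mp^n$ note that $q^ip^m\cdot q^mp^n\cdot q^np^i=q^ip^i$, so separate continuity makes the map $x\mapsto q^ip^m\cdot x\cdot q^np^i$ continuous and the preimage of the isolated point $q^ip^i$ an open neighbourhood $W$ of $q^mp^n$. Every $y\in W$ solves $q^ip^m\cdot y\cdot q^np^i=q^ip^i$, an equation with only finitely many solutions by Lemma~I.1 of \cite{Eberhart-Selden=1969}, so $W$ is finite and, $\tau$ being $T_1$, the point $q^mp^n$ is isolated. As $q^mp^n$ was arbitrary, $\tau$ is discrete. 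The only real content lies in the middle step: the two hypotheses convert isolation of $q^ip^i$ inside the band into openness of both the $\mathscr{R}$- and the $\mathscr{L}$-class through $q^ip^i$, whose meet collapses to the single point; the reduction and the final bootstrap are routine $T_1$-bookkeeping.
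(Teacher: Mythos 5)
Your proof is correct, but its architecture is genuinely different from the paper's. After the same initial $T_1$-shrinking to get a neighbourhood meeting $E({\mathscr{C}}(p,q))$ only in $\{q^ip^i\}$, the paper does \emph{not} isolate $q^ip^i$ in the whole space; instead it first uses translations $x\mapsto q^ip^j\cdot x\cdot q^jp^i$ together with Lemma~I.1 of \cite{Eberhart-Selden=1969} to propagate the property ``has a neighbourhood meeting $E({\mathscr{C}}(p,q))$ only in itself'' from $q^ip^i$ to \emph{every} idempotent $q^jp^j$, and only then invokes the continuity of $x\mapsto xx^{-1}$ and $x\mapsto x^{-1}x$ at an arbitrary point $q^mp^n$: a neighbourhood of $q^mp^n$ must map into $\{q^mp^m\}$ under the first map and into $\{q^np^n\}$ under the second, which pins down both exponents and collapses the neighbourhood to $\{q^mp^n\}$. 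You invert this order: you use the continuity hypothesis once, at $q^ip^i$ alone, noting that the preimages of the relatively open singleton $\{q^ip^i\}$ are the $\mathscr{R}$-class $\{q^ip^t\colon t\in\omega\}$ and the $\mathscr{L}$-class $\{q^sp^i\colon s\in\omega\}$, whose intersection is the trivial $\mathscr{H}$-class $\{q^ip^i\}$; this makes $q^ip^i$ isolated in $({\mathscr{C}}(p,q),\tau)$, and you then spread isolation to every point by the translation-plus-Lemma~I.1-plus-$T_1$ argument that is exactly the second half of the proof of Theorem~\ref{theorem--2.8}. Your route is more economical: the continuity of the two retractions is exploited only at the single point $q^ip^i$, and the final step factors through a reusable principle (a shift-continuous $T_1$-topology on ${\mathscr{C}}(p,q)$ with an isolated point is discrete). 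The paper's route keeps all the work inside the idempotent band until the last moment, which aligns with how the lemma is later applied in Propositions~\ref{proposition-2.11} and~\ref{proposition-2.14}. One small caveat, shared with the paper: Lemma~I.1 is stated for the equations $A\cdot X=B$ and $X\cdot C=D$, so finiteness of the solution set of the two-sided equation $q^ip^m\cdot y\cdot q^np^i=q^ip^i$ strictly requires composing the two finiteness statements (finitely many $z$ with $z\cdot q^np^i=q^ip^i$, and for each such $z$ finitely many $y$ with $q^ip^m\cdot y=z$); the paper makes the same silent leap in Theorem~\ref{theorem--2.8}, so you are at the same level of rigor.
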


\begin{proof}
Since $\tau$ is a $T_1$-topology on ${\mathscr{C}}(p,q)$, without loss of generality we may assume that $U(q^ip^i)\cap E({\mathscr{C}}(p,q))=\left\{q^ip^i\right\}$. By Lemma~I.1 of \cite{Eberhart-Selden=1969} the equations $A\cdot X=B$ and $X\cdot C=D$ have only finite sets of solutions in ${\mathscr{C}}(p,q)$, and hence the separate continuity of the semigroup operation in $({\mathscr{C}}(p,q),\tau)$ implies that for any idempotent $q^jp^j\in {\mathscr{C}}(p,q)$, $j\in\omega$, there exists an open neighbourhood $V(q^jp^j)$ of  $q^ip^i$ in $({\mathscr{C}}(p,q),\tau)$ such that
\begin{equation*}
  q^ip^j\cdot V(q^jp^j)\cdot q^jp^i\subseteq U(q^ip^i).
\end{equation*}
Also, by the definition of the semigroup operation on ${\mathscr{C}}(p,q)$ we get that the set $U(q^jp^j)\cap E({\mathscr{C}}(p,q))$ is finite, as well. Hence without loss of generality we may assume that every idempotent $q^jp^j\in {\mathscr{C}}(p,q)$, $j\in\omega$ has an open neighbourhood $W(q^jp^j)$ in $({\mathscr{C}}(p,q),\tau)$ such that $W(q^jp^j)\cap E({\mathscr{C}}(p,q))=\left\{q^jp^j\right\}$.

Since the maps ${\mathscr{C}}(p,q)\to E({\mathscr{C}}(p,q))$, $x\mapsto xx^{-1}$ and ${\mathscr{C}}(p,q)\to E({\mathscr{C}}(p,q))$, $x\mapsto x^{-1}x$ are continuous, for any point $q^mp^n\in {\mathscr{C}}(p,q)$, $m.n\in\omega$, there exists an open neighbourhood $O(q^mp^n)$ of the point $q^mp^n$ in $({\mathscr{C}}(p,q),\tau)$ such that
\begin{equation*}
q^{m_1}p^{n_1}\cdot (q^{m_1}p^{n_1})^{-1}=q^{m_1}p^{m_1}\subseteq \{q^{m}p^{m}\}
\end{equation*}
and
\begin{equation*}
  (q^{m_1}p^{n_1})^{-1}\cdot q^{m_1}p^{n_1}=q^{n_1}p^{n_1}\subseteq \{q^{n}p^{n}\},
\end{equation*}
for all $q^{m_1}p^{n_1}\in O(q^mp^n)$. The last two inclusions imply that the neighbourhood $O(q^mp^n)$ is a singleton, i.e., $O(q^mp^n)=\left\{q^mp^n\right\}$. This implies the statement of the lemma.
\end{proof}

Let $X$ be a topological space and $Y$ be a subspace of $X$. We shall say that the space \emph{$Y$ is quasi-regular at a point} $x\in Y$ if for any open neighbourhood $U(x)$ of $x$ in $Y$ there exists an open nonempty subset $V$ in $Y$ such that $\operatorname{cl}_Y(V)\subseteq U(x)$.

\begin{lemma}\label{lemma-2.10}
Let $\tau$ be a shift-continuous $T_1$-topology on ${\mathscr{C}}(p,q)$. If there exists a point $q^ip^j\in {\mathscr{C}}(p,q)$ such that ${\updownarrow_{\preccurlyeq}}q^ip^j$ is quasi-regular at $q^ip^j$, then for any point $q^mp^n\in {\mathscr{C}}(p,q)$ the space ${\updownarrow_{\preccurlyeq}}q^mp^n$ is quasi-regular at  $q^mp^n$.
\end{lemma}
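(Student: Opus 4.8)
The plan is to reduce everything to the order structure. By Lemma~\ref{lemma-2.3} the set ${\updownarrow_{\preccurlyeq}}q^ip^j$ is the whole diagonal $\{q^sp^t\colon s-t=i-j\}$, a $\preccurlyeq$-chain in which $q^ip^j$ has only finitely many elements above it (namely ${\uparrow_{\preccurlyeq}}q^ip^j\setminus\{q^ip^j\}$) and all of ${\downarrow_{\preccurlyeq}}q^ip^j$ below it. Two facts about quasi-regularity at a point will be used repeatedly: it is a homeomorphism invariant, and it passes to any open subspace containing the point (there closures can only shrink). Shift-continuity makes every translation $x\mapsto uxv$ continuous, and Lemma~\ref{lemma-2.4} shows that such translations respect diagonals.

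For arbitrary $q^ip^j$ and $q^mp^n$ I would first compare the lower sets. The translation $h\colon x\mapsto q^mp^i\cdot x\cdot q^jp^n$ is continuous, satisfies $h(q^ip^j)=q^mp^n$, and a routine computation with the multiplication rule shows that it carries ${\downarrow_{\preccurlyeq}}q^ip^j$ bijectively onto ${\downarrow_{\preccurlyeq}}q^mp^n$ (sending the top $q^ip^j$ to the top $q^mp^n$), with inverse the symmetric translation $x\mapsto q^ip^m\cdot x\cdot q^np^j$; hence $h$ is a homeomorphism of these lower sets. Since ${\downarrow_{\preccurlyeq}}q^ip^j$ is open in ${\updownarrow_{\preccurlyeq}}q^ip^j$ (its complement is finite, hence closed, as $\tau$ is $T_1$), the hypothesis gives quasi-regularity of ${\downarrow_{\preccurlyeq}}q^ip^j$ at $q^ip^j$, and transporting along $h$ yields quasi-regularity of ${\downarrow_{\preccurlyeq}}q^mp^n$ at $q^mp^n$ for every $q^mp^n$.

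It then remains to pass from the lower set back to the full set, i.e. to show that quasi-regularity of ${\downarrow_{\preccurlyeq}}q^mp^n$ at $q^mp^n$ forces the same for ${\updownarrow_{\preccurlyeq}}q^mp^n$. For this I would use the continuous retraction $r\colon x\mapsto x\cdot q^np^n$ of ${\updownarrow_{\preccurlyeq}}q^mp^n$ onto ${\downarrow_{\preccurlyeq}}q^mp^n$, which fixes ${\downarrow_{\preccurlyeq}}q^mp^n$ pointwise and sends the finite set $E={\updownarrow_{\preccurlyeq}}q^mp^n\setminus{\downarrow_{\preccurlyeq}}q^mp^n$ to $q^mp^n$. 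Its continuity yields the key closure identity: if $V\subseteq{\downarrow_{\preccurlyeq}}q^mp^n$ and $q^mp^n\notin\operatorname{cl}(V)$, then no point of $E$ lies in $\operatorname{cl}(V)$ (any such $e$ would give $q^mp^n=r(e)\in\operatorname{cl}(V)$), so the closure of $V$ taken in ${\updownarrow_{\preccurlyeq}}q^mp^n$ agrees with the one taken in ${\downarrow_{\preccurlyeq}}q^mp^n$.

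Thus the whole statement reduces to one separation assertion, which I expect to be the main obstacle: for each neighbourhood $U$ of $q^mp^n$ one must find a nonempty open $V\subseteq{\downarrow_{\preccurlyeq}}q^mp^n$ with $\operatorname{cl}(V)\subseteq U$ \emph{and} $q^mp^n\notin\operatorname{cl}(V)$. When $q^mp^n$ is isolated this is immediate; in general one must exclude the possibility that $q^mp^n$ has only dense neighbourhoods inside its own lower set. This does not follow from the $T_1$ axiom alone — the rationals together with one extra point all of whose neighbourhoods are cofinite form a quasi-regular $T_1$ space for which the analogous finite-defect extension fails — so the argument must use shift-continuity essentially, presumably by pushing a neighbourhood far down the chain via the self-homeomorphism $x\mapsto q\,x\,p$ and separating it from the maximal point $q^mp^n$. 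Establishing this last separation is the crux; the remainder is the bookkeeping described above.
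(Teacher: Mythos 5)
Your first two steps coincide with the paper's own proof: the paper likewise observes that ${\downarrow_{\preccurlyeq}}q^ip^j$ is open in ${\updownarrow_{\preccurlyeq}}q^ip^j$ (finite complement plus $T_1$) and that the translations $x\mapsto q^ip^m\cdot x\cdot q^np^j$ restrict to mutually inverse homeomorphisms of the lower sets matching up the top points. But your proposal is not a complete proof, and you say so yourself: the ``passing up'' step, from quasi-regularity of ${\downarrow_{\preccurlyeq}}q^mp^n$ at $q^mp^n$ to quasi-regularity of ${\updownarrow_{\preccurlyeq}}q^mp^n$ at $q^mp^n$ when $F={\uparrow_{\preccurlyeq}}q^mp^n\setminus\{q^mp^n\}\neq\varnothing$ (i.e.\ when $m,n\geqslant 1$), is left as an unproven ``crux''. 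That is a genuine gap. Two remarks put it in context. First, the paper's proof is silent on exactly this step as well: it establishes the two easy transfers and then asserts the lemma; the one instance the paper later needs (transfer to $q^0p^0$, where ${\updownarrow_{\preccurlyeq}}q^0p^0={\downarrow_{\preccurlyeq}}q^0p^0=E({\mathscr{C}}(p,q))$, feeding Proposition~\ref{proposition-2.11}) happens to avoid it. Second, your $\mathbb{Q}$-plus-one-point example is correct: it shows the step is not a purely topological consequence of $T_1$ plus finiteness of the defect, so a substantive argument really is required there.

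The gap can be closed, and by iterating your retraction rather than by the shift $x\mapsto qxp$ you suggest at the end (shifting a neighbourhood down the chain moves it outside $U$, so that route stalls). Write $x_k=q^{m+k}p^{n+k}$, $W={\downarrow_{\preccurlyeq}}q^mp^n$, $W_c=\{x_k\colon k\geqslant c\}$, $Y={\updownarrow_{\preccurlyeq}}q^mp^n$, and put $\rho_c(z)=z\cdot q^{n+c}p^{n+c}$: each $\rho_c$ is continuous, fixes $W_c$ pointwise, and sends every other point of $Y$ (in particular all of $F$) to $x_c$; your $r$ is $\rho_0$. Hence if $V\subseteq W_c$ is nonempty open and $\operatorname{cl}_Y(V)$ meets $F$, then $x_{c'}\in\operatorname{cl}_Y(V)$ for every $c'\leqslant c$. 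Now suppose $Y$ is not quasi-regular at $x_0$, with failing neighbourhood $U$, which we may take inside $W$ by deleting the finite set $F$. No point of $U$ is isolated (a singleton would witness quasi-regularity), so the set $V_0$ given by quasi-regularity of $W$ at $x_0$, with $\operatorname{cl}_W(V_0)\subseteq U$, is infinite, and each $V_c=V_0\cap W_c$ is nonempty open with $\operatorname{cl}_W(V_c)\subseteq U$. Since no $V_c$ may witness quasi-regularity of $Y$, each $\operatorname{cl}_Y(V_c)$ meets $F$, and the maps $\rho_{c'}$ give $x_{c'}\in\operatorname{cl}_W(V_c)\subseteq U$ for all $c'\leqslant c$; letting $c\to\infty$ forces $U=W$. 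Repeating the argument for an arbitrary nonempty open $O\subseteq W$ (intersected with tails $W_c$) then shows that every nonempty open subset of $W$ is dense in $W$; but then quasi-regularity of $W$ at $x_0$ fails for the proper neighbourhood $W\setminus\{x_1\}$, which is open by $T_1$ --- a contradiction. So your skeleton is sound and completable, but the decisive separation argument, which is the actual content of the lemma beyond what the paper writes down, is absent from the proposal.
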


\begin{proof}
First we observe that for any $q^ip^j\in {\mathscr{C}}(p,q)$ the set ${\downarrow_{\preccurlyeq}}q^ip^j$ is open in ${\updownarrow_{\preccurlyeq}}q^ip^j$ because the $\tau$ is a $T_1$-topology on ${\mathscr{C}}(p,q)$.

\smallskip

We define the mapping $\mathfrak{f}_{q^ip^j}^{q^mp^n}\colon {\mathscr{C}}(p,q)\to {\mathscr{C}}(p,q)$ by the formula $\mathfrak{f}_{q^ip^j}^{q^mp^n}(x)=q^ip^m\cdot x\cdot q^np^j$, for any $i,j,m,n\in \omega$. Then by Lemma~\ref{lemma-2.3} we have that $q^{m+k}p^{n+k}\in {\downarrow_{\preccurlyeq}}q^ip^j$ and the semigroup operation in ${\mathscr{C}}(p,q)$ implies that
\begin{align*}
  \mathfrak{f}_{q^ip^j}^{q^mp^n}(q^{m+k}p^{n+k})&=q^ip^m\cdot q^{m+k}p^{n+k}\cdot q^np^j= \\
   &=q^i(p^mq^{m+k})(p^{n+k}q^n)p^j=\\
   &=q^iq^kp^kp^j=\\
   &=q^{i+k}p^{j+k},
\end{align*}
for any $k\in\omega$. Hence, the restrictions
\begin{equation*}
  \mathfrak{f}_{q^ip^j}^{q^mp^n}{\upharpoonleft}_{{\downarrow_{\preccurlyeq}}q^mp^n}\colon {\downarrow_{\preccurlyeq}}q^mp^n\to {\downarrow_{\preccurlyeq}}q^ip^j \qquad \hbox{and} \qquad \mathfrak{f}_{q^mp^n}^{q^ip^j}{\upharpoonleft}_{{\downarrow_{\preccurlyeq}}q^ip^j}\colon {\downarrow_{\preccurlyeq}}q^ip^j\to {\downarrow_{\preccurlyeq}}q^mp^n
\end{equation*}
are mutually inverse mappings and by the separate continuity of the semigroup operation in $({\mathscr{C}}(p,q),\tau)$ they are homeomorphisms. Since the set ${\downarrow_{\preccurlyeq}}q^sp^t$ is open in ${\updownarrow_{\preccurlyeq}}q^sp^t$ for any $s,t\in \omega$, the above arguments imply the statement of the lemma.
\end{proof}

\begin{proposition}\label{proposition-2.11}
Let $\tau$ be an inverse semigroup $T_1$-topology on ${\mathscr{C}}(p,q)$. If there exists an idempotent $q^ip^i\in {\mathscr{C}}(p,q)$ such that the space $E({\mathscr{C}}(p,q))$  is quasi-regular at $q^ip^i$, then $\tau$ is discrete.
\end{proposition}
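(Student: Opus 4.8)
The plan is to reduce everything to Lemma~\ref{lemma-2.9} by showing that quasi-regularity of $E({\mathscr{C}}(p,q))$ at a single idempotent forces some idempotent to be isolated in the subspace of idempotents. Throughout I write $e_n=q^np^n$ and $E=E({\mathscr{C}}(p,q))$, so that $E=\{e_n:n\in\omega\}$ is a chain with $e_ae_b=e_{\max\{a,b\}}$. Since $\tau$ is an inverse semigroup topology, the restricted multiplication $\mu\colon E\times E\to E$ is continuous and the maps $x\mapsto xx^{-1}$, $x\mapsto x^{-1}x$ are continuous, so the hypotheses of Lemma~\ref{lemma-2.9} are available. Because $\tau$ is $T_1$, every finite initial segment $\{e_0,\dots,e_{k-1}\}$ is closed, hence every tail $T_k=\{e_n:n\geqslant k\}$ is open in $E$.

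Suppose, aiming at the application of Lemma~\ref{lemma-2.9}, that no $e_k$ is isolated in $E$. First I would record that then every nonempty open $O\subseteq E$ is unbounded: if $e_m\in O$, then since $T_m$ is open and $e_m$ is not isolated, $O$ meets $T_m\setminus\{e_m\}=T_{m+1}$, and iterating yields elements of $O$ of arbitrarily large index. The decisive step is then to prove that every nonempty open $V\subseteq E$ is dense in $E$. Assuming it is not, the set $W=E\setminus\operatorname{cl}_E(V)$ is nonempty, open and disjoint from $V$. Choosing $e_b\in W$ and $e_a\in V$ with $a>b$ (possible by unboundedness) gives $e_ae_b=e_a\in V$, so by continuity of $\mu$ there are open neighbourhoods $A\ni e_a$ and $B\ni e_b$ with $B\subseteq W$ and $\mu(A\times B)\subseteq V$. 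Picking $e_{b'}\in B$ with $b'>a$, again by unboundedness, one gets $e_ae_{b'}=e_{b'}\in V\cap B\subseteq V\cap W=\varnothing$, a contradiction. Hence, under the no-isolated-point assumption, every nonempty open subset of $E$ is dense.

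This contradicts quasi-regularity at $e_i$: applying it to the proper open neighbourhood $U=E\setminus\{e_j\}$ of $e_i$ (for any $j\neq i$, which exists since $E$ is infinite) produces a nonempty open $V$ with $\operatorname{cl}_E(V)\subseteq U\neq E$, so $V$ is not dense. Therefore some $e_k$ is isolated in $E$, that is, $\{e_k\}$ is open in the subspace $E$, and so there is a $\tau$-open set $U(e_k)$ in ${\mathscr{C}}(p,q)$ with $U(e_k)\cap E=\{e_k\}$ finite. Lemma~\ref{lemma-2.9} then yields that $\tau$ is discrete.

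I expect the density step to be the main obstacle. The purely topological data available at that point (a $T_1$ topology on the chain $E$ with all tails open and no isolated points) does not by itself force every nonempty open set to be dense, so the semilattice identity $e_ae_b=e_{\max\{a,b\}}$ together with the joint continuity of multiplication must be used in an essential way, precisely to push a sufficiently high-index point of $V$ into the forbidden open set $W$. As an optional remark, the translations $x\mapsto qxp$ and $x\mapsto pxq$ restrict to mutually inverse homeomorphisms between $E$ and the open subspace $E\setminus\{e_0\}$, so that the set of isolated idempotents is shift-invariant; this shows that once one idempotent is isolated the whole subspace $E$ is discrete, but it is not needed for the argument above.
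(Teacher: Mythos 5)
Your proof is correct. It shares with the paper the same endgame (reduction to Lemma~\ref{lemma-2.9}) and the same core mechanism --- joint continuity of multiplication on the idempotent chain $E$, where $e_ae_b=e_{\max\{a,b\}}$, combined with the fact that nonempty open subsets of $E$ are unbounded, so that a sufficiently high-index element of one open set can be pushed into a disjoint one --- but the decomposition is genuinely different. The paper works directly with the sets supplied by quasi-regularity at $q^ip^i$: it splits a neighbourhood $U(q^ip^i)$ into $W_{q^ip^i}$ (with $\operatorname{cl}_{E}(W_{q^ip^i})\subseteq U(q^ip^i)\setminus\{q^ip^i\}$) and $O(q^ip^i)=U(q^ip^i)\setminus\operatorname{cl}_{E}(W_{q^ip^i})$, picks $q^jp^j\in W_{q^ip^i}$ below $q^ip^i$, and uses continuity of the product $q^jp^j\cdot q^ip^i=q^jp^j$ to force a product of two infinite idempotent neighbourhoods into $W_{q^ip^i}$, which the chain structure contradicts; infiniteness of the relevant sets is secured by repeated appeals to Lemma~\ref{lemma-2.9}. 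You instead isolate a clean intermediate statement: if no idempotent is isolated in $E$, then every nonempty open subset of $E$ is dense (i.e., $E$ is hyperconnected), and then observe that quasi-regularity at a single point of an infinite $T_1$ space is flatly incompatible with this, using only the trivial neighbourhood $E\setminus\{e_j\}$. Your organization buys something: the hyperconnectedness lemma is a standalone structural fact that simultaneously rules out quasi-regularity, semiregularity, and Hausdorffness at any point of $E$, so it would also feed into Proposition~\ref{proposition-2.14} with no extra work; the paper's argument, by contrast, is more self-contained at each step but has to be redone (in modified form) for the semiregular case. One small point worth making explicit in a final write-up: when you apply joint continuity to get $\mu(A\times B)\subseteq V$ you are implicitly passing between $\tau$-open sets and their traces on $E$, using that $E$ is a subsemigroup, which is routine but should be said.
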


\begin{proof}
Let $U(q^ip^i)$ be an open neighbourhood of the point $q^ip^i$ in $E({\mathscr{C}}(p,q))$. Without loss of generality we may assume that the set $U(q^ip^i)$ is infinite, because otherwise by Lemma \ref{lemma-2.9} the topological space $({\mathscr{C}}(p,q),\tau)$ is discrete. Since $({\mathscr{C}}(p,q),\tau)$ is a $T_1$-space, $V_{q^ip^i}=U(q^ip^i)\setminus \{q^ip^i\}$ is an open set in $E({\mathscr{C}}(p,q))$. Then there exists a nonempty open subset $W_{q^ip^i}\subseteq V_{q^ip^i}$ such that $\operatorname{cl}_{E({\mathscr{C}}(p,q))}(W_{q^ip^i})\subseteq V_{q^ip^i}$. Hence
\begin{equation*}
O(q^ip^i)=U(q^ip^i)\setminus \operatorname{cl}_{E({\mathscr{C}}(p,q))}(W_{q^ip^i})
\end{equation*}
is an open neighbourhood of the point $q^ip^i$ in $E({\mathscr{C}}(p,q))$. Without loss of generality we may assume that the set $W_{q^ip^i}$ is infinite, because otherwise there exists an idempotent in $({\mathscr{C}}(p,q),\tau)$ which has a finite open neighbourhood, and hence by Lemma \ref{lemma-2.9} the topological space $({\mathscr{C}}(p,q),\tau)$ is discrete. The structure of the natural partial order $\preccurlyeq$ on the bicyclic monoid ${\mathscr{C}}(p,q)$ implies that the set ${\uparrow_{\preccurlyeq}}q^ip^i$ is finite, and hence there exists an idempotent $q^jp^j\in W_{q^ip^i}$ such that $q^jp^j\in {\downarrow_{\preccurlyeq}^\circ}q^ip^i$. Then $q^jp^j\cdot q^ip^i=q^jp^j$ and the continuity of the semigroup operation in $({\mathscr{C}}(p,q),\tau)$ implies that there exist open neighbourhoods $W_1(q^ip^i)$ and $W_1(q^jp^j)$ of the points $q^ip^i$ and $q^jp^j$ in $({\mathscr{C}}(p,q),\tau)$, respectively, such that
\begin{equation}\label{eq-2.1}
  (W_1(q^jp^j)\cap E({\mathscr{C}}(p,q)))\cdot (W_1(q^ip^i)\cap E({\mathscr{C}}(p,q)))\subseteq W_{q^ip^i},
\end{equation}
\begin{equation*}
  W_1(q^ip^i)\cap E({\mathscr{C}}(p,q))\subseteq O(q^ip^i),
\end{equation*}
\begin{equation*}
  W_1(q^jp^j)\cap E({\mathscr{C}}(p,q))\subseteq W_{q^ip^i},
\end{equation*}
and the sets $W_1(q^ip^i)\cap E({\mathscr{C}}(p,q))$ and $W_1(q^jp^j)\cap E({\mathscr{C}}(p,q))$ are infinite. The last two properties imply that for any
\begin{equation*}
q^kp^k\in W_1(q^jp^j)\cap E({\mathscr{C}}(p,q))
\end{equation*}
there exists
\begin{equation*}
q^lp^l\in W_1(q^ip^i)\cap E({\mathscr{C}}(p,q))
\end{equation*}
such that
\begin{equation*}
q^kp^k\cdot q^lp^l=q^lp^l\cdot q^kp^k=q^lp^l,
\end{equation*}
which contradicts condition \eqref{eq-2.1}. The obtained contradiction implies that at least one of the sets $W_1(q^ip^i)\cap E({\mathscr{C}}(p,q))$ or $W_1(q^jp^j)\cap E({\mathscr{C}}(p,q))$ is finite. Then by Lemma~\ref{lemma-2.9} the topology $\tau$ is discrete.
\end{proof}

Lemma~\ref{lemma-2.10} and Proposition~\ref{proposition-2.11} imply the following theorem.

\begin{theorem}\label{theorem-2.12}
Let $\tau$ be an inverse semigroup $T_1$-topology on ${\mathscr{C}}(p,q)$. If there exists a point $q^ip^j\in {\mathscr{C}}(p,q)$ such that the space ${\updownarrow_{\preccurlyeq}}q^ip^j$ is  quasi-regular at $q^ip^j$, then $\tau$ is discrete.
\end{theorem}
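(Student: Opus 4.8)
The plan is to obtain the theorem by combining Lemma~\ref{lemma-2.10} and Proposition~\ref{proposition-2.11}, the only genuine new input being a small structural observation about the natural partial order on ${\mathscr{C}}(p,q)$.

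First I would observe that an inverse semigroup topology is in particular shift-continuous: since $({\mathscr{C}}(p,q),\tau)$ is a topological inverse semigroup, its multiplication is jointly, hence separately, continuous, so $\tau$ is a shift-continuous $T_1$-topology on ${\mathscr{C}}(p,q)$. Thus the hypotheses of Lemma~\ref{lemma-2.10} are met, and from the assumed quasi-regularity of ${\updownarrow_{\preccurlyeq}}q^ip^j$ at $q^ip^j$ I obtain that the space ${\updownarrow_{\preccurlyeq}}q^mp^n$ is quasi-regular at $q^mp^n$ for \emph{every} point $q^mp^n\in{\mathscr{C}}(p,q)$.

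The key step is to specialise this conclusion to an idempotent. Using Lemma~\ref{lemma-2.3} I would compute that for any idempotent $q^kp^k\in{\mathscr{C}}(p,q)$ one has ${\downarrow_{\preccurlyeq}}q^kp^k=\left\{q^sp^s\colon s\geqslant k\right\}$ and ${\uparrow_{\preccurlyeq}}q^kp^k=\left\{q^sp^s\colon s\leqslant k\right\}$, since $q^sp^t\preccurlyeq q^kp^k$ (respectively $q^kp^k\preccurlyeq q^sp^t$) forces $s-t=k-k=0$, that is $s=t$. Consequently both sets consist entirely of idempotents, and
\begin{equation*}
{\updownarrow_{\preccurlyeq}}q^kp^k=\left\{q^sp^s\colon s\in\omega\right\}=E({\mathscr{C}}(p,q)).
\end{equation*}
Applying the previous paragraph with $q^mp^n=q^kp^k$ then shows that the space $E({\mathscr{C}}(p,q))$ is quasi-regular at the idempotent $q^kp^k$.

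Finally, Proposition~\ref{proposition-2.11} applies directly to this idempotent and yields that $\tau$ is discrete, which completes the proof. I do not anticipate a serious obstacle: the argument is a routine chaining of the two earlier results, and the only point requiring a moment's care is the identification ${\updownarrow_{\preccurlyeq}}q^kp^k=E({\mathscr{C}}(p,q))$, which is immediate from Lemma~\ref{lemma-2.3}, together with the harmless remark that an inverse semigroup topology is shift-continuous so that Lemma~\ref{lemma-2.10} may be invoked.
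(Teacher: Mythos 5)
Your proposal is correct and follows exactly the route the paper intends: the paper derives Theorem~\ref{theorem-2.12} by chaining Lemma~\ref{lemma-2.10} and Proposition~\ref{proposition-2.11}, and your argument supplies precisely the needed glue, namely that an inverse semigroup topology is shift-continuous and that ${\updownarrow_{\preccurlyeq}}q^kp^k=E({\mathscr{C}}(p,q))$ for any idempotent $q^kp^k$, so quasi-regularity propagated to an idempotent makes Proposition~\ref{proposition-2.11} applicable.
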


Let $X$ be a topological space and $Y$ be a subspace of $X$. We shall say that the space \emph{$Y$ is semiregular at point} $x\in Y$ if there exists a basis $\mathscr{B}(x)$ of the topology of the space $Y$ at $x$ which consists of regular open subsets of $Y$, i.e., $U=\operatorname{int}_Y(\operatorname{cl}_Y(U))$ for any $U\in \mathscr{B}(x)$.

\smallskip

The proof of the following lemma is similar to Lemma~\ref{lemma-2.10}.

\begin{lemma}\label{lemma-2.13}
Let $\tau$ be a shift-continuous $T_1$-topology on ${\mathscr{C}}(p,q)$. If there exists a point $q^ip^j\in {\mathscr{C}}(p,q)$ such that the space ${\updownarrow_{\preccurlyeq}}q^ip^j$ is semiregular at $q^ip^j$, then for any point $q^mp^n\in {\mathscr{C}}(p,q)$ the space ${\updownarrow_{\preccurlyeq}}q^mp^n$ is semiregular at  $q^mp^n$.
\end{lemma}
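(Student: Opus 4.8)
The plan is to follow the template of Lemma~\ref{lemma-2.10}, replacing quasi-regularity by semiregularity throughout. The argument rests on two ingredients. First, for every $q^sp^t\in{\mathscr{C}}(p,q)$ the set ${\downarrow_{\preccurlyeq}}q^sp^t$ is open in ${\updownarrow_{\preccurlyeq}}q^sp^t$ (since $\tau$ is $T_1$ and the set $F:={\uparrow_{\preccurlyeq}}q^sp^t\setminus\{q^sp^t\}$ is finite, hence closed). Second, exactly as in Lemma~\ref{lemma-2.10}, the maps $\mathfrak{f}_{q^ip^j}^{q^mp^n}$ and $\mathfrak{f}_{q^mp^n}^{q^ip^j}$ restrict to mutually inverse homeomorphisms between ${\downarrow_{\preccurlyeq}}q^mp^n$ and ${\downarrow_{\preccurlyeq}}q^ip^j$ carrying $q^mp^n$ to $q^ip^j$. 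Because being semiregular at a point is a topological invariant, these homeomorphisms transport semiregularity of ${\downarrow_{\preccurlyeq}}q^ip^j$ at $q^ip^j$ to semiregularity of ${\downarrow_{\preccurlyeq}}q^mp^n$ at $q^mp^n$.

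With these in hand I reduce the statement to a purely local equivalence: for each $q^sp^t$, writing $Y={\updownarrow_{\preccurlyeq}}q^sp^t$ and $D={\downarrow_{\preccurlyeq}}q^sp^t$, the space $Y$ is semiregular at $q^sp^t$ if and only if its open subspace $D$ is semiregular at $q^sp^t$. Granting this, the lemma follows by the chain: $Y_{q^ip^j}$ semiregular at $q^ip^j$ $\Rightarrow$ $D_{q^ip^j}$ semiregular at $q^ip^j$ $\Rightarrow$ (homeomorphism) $D_{q^mp^n}$ semiregular at $q^mp^n$ $\Rightarrow$ $Y_{q^mp^n}$ semiregular at $q^mp^n$. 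The forward direction of the local equivalence is immediate: since $D$ is open in $Y$, for any $U\subseteq D$ open one has $\operatorname{int}_D(\operatorname{cl}_D(U))=\operatorname{int}_Y(\operatorname{cl}_Y(U))\cap D$, so a regular-open set of $Y$ contained in $D$ is automatically regular open in $D$; shrinking a regular-open basis at $q^sp^t$ in $Y$ into the open neighbourhood $D$ therefore yields a regular-open basis at $q^sp^t$ in $D$.

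The main obstacle is the reverse direction, passing from a regular-open basis in $D$ to one in $Y=D\cup F$ with $F$ finite. The difficulty is that for $B$ regular open in $D$ the regularization $\operatorname{int}_Y(\operatorname{cl}_Y(B))$ may a priori absorb points of $F$: the identity above only gives $\operatorname{int}_Y(\operatorname{cl}_Y(B))=B\cup G_B$ with $G_B\subseteq F$, and if $G_B\neq\varnothing$ then $B$ itself fails to be regular open in $Y$, while the enlargement $B\cup G_B$ may escape a prescribed neighbourhood of $q^sp^t$. Thus the crux is to show that each of the finitely many points of $F$ can be kept out of $\operatorname{int}_Y(\operatorname{cl}_Y(B))$ for arbitrarily small $B$, that is, that $q^sp^t$ admits $D$-neighbourhoods whose $Y$-closure misses $F$. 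I expect to secure this from the finiteness and closedness of $F$ together with the separate continuity of translations, exploiting the continuous retraction $z\mapsto q^sp^s\cdot z\cdot q^tp^t$ of $Y$ onto $D$ (which fixes $D$ pointwise and collapses $F$ to $q^sp^t$), so that the regular-open sets supplied in $D$ can be taken regular open in $Y$ as well. This separation step, rather than the homeomorphism bookkeeping, is where the real work lies.
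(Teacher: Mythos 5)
Your setup follows the paper's intended route (openness of ${\downarrow_{\preccurlyeq}}q^sp^t$ in ${\updownarrow_{\preccurlyeq}}q^sp^t$ plus the $\mathfrak{f}$-homeomorphisms; the paper's own proof is literally ``similar to Lemma~\ref{lemma-2.10}'' and silently skips the reverse transfer), and your forward transfer and homeomorphism bookkeeping are correct. The genuine gap is the crux you yourself flag: it is not only left unproven (``I expect to secure this\dots''), but the plan for it cannot work. The separation you need --- arbitrarily small $D$-neighbourhoods $B$ of $q^sp^t$ with $\operatorname{cl}_Y(B)\cap F=\varnothing$ --- cannot follow from the ingredients you list (finiteness and closedness of $F$, separate continuity, the retraction), because all of those ingredients are present in the non-discrete topologies $\tau_1$ and $\tau_2$ of Examples~\ref{example-2.1} and \ref{example-2.5}, where the separation fails at every point with $F\neq\varnothing$: there every nonempty open subset of a chain ${\updownarrow_{\preccurlyeq}}q^sp^t$ is dense in that chain. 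Worse, the retraction works against you: if $f\in F$ is not isolated in $Y$, continuity of $\phi(z)=q^sp^s\cdot z\cdot q^tp^t$ at $f$ gives an open $O_f\ni f$ with $O_f\subseteq B\cup F$ for any prescribed neighbourhood $B$ of $q^sp^t$; since in a $T_1$-space a non-isolated point cannot have a neighbourhood inside the finite set $F$, every neighbourhood of $f$ then meets $B$, i.e.\ $f\in\operatorname{cl}_Y(B)$. So your separation property holds precisely when all points of $F$ are isolated in $Y$, and nothing in your argument establishes that.

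What is missing is any use of the semiregularity hypothesis itself to exclude the bad case. A route that does close the argument: prove the dichotomy that each chain ${\updownarrow_{\preccurlyeq}}q^sp^t=\{z_0\succ z_1\succ z_2\succ\cdots\}$ is, in the subspace topology, either discrete or hyperconnected (any two nonempty open subsets meet). Indeed, if no point is isolated, then every nonempty open set is infinite ($T_1$); given open $U\ni z_k$ and $V\ni z_l$ with $k\leqslant l$, continuity at $z_k$ of the retraction collapsing $\{z_0,\dots,z_{l-1}\}$ to $z_l$ yields an open $W\ni z_k$ with image inside $V$, and any point $z_r\in U\cap W$ with $r\geqslant l$ (which exists, $U\cap W$ being infinite) is fixed by the retraction, hence lies in $U\cap V$. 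In the hyperconnected case the only regular open subsets of the chain are $\varnothing$ and the whole chain, so the chain is semiregular at no point; therefore the hypothesis of the lemma forces ${\updownarrow_{\preccurlyeq}}q^ip^j$ to be discrete, all chains are then discrete by your homeomorphisms, and the conclusion is immediate. Without this step (or an equivalent use of the hypothesis) the proposal does not prove the lemma.
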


\begin{proposition}\label{proposition-2.14}
Let $\tau$ be a shift-continuous $T_1$-topology on the bicyclic monoid ${\mathscr{C}}(p,q)$ such that the maps ${\mathscr{C}}(p,q)\to E({\mathscr{C}}(p,q))$, $x\mapsto xx^{-1}$ and ${\mathscr{C}}(p,q)\to E({\mathscr{C}}(p,q))$, $x\mapsto x^{-1}x$ are continuous. If there exists an idempotent $q^ip^i\in {\mathscr{C}}(p,q)$ such that the space $E({\mathscr{C}}(p,q))$  is semiregular at $q^ip^i$, then $\tau$ is discrete.
\end{proposition}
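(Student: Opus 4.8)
The plan is to argue by contradiction and to funnel everything into Lemma~\ref{lemma-2.9}: I assume that $\tau$ is \emph{not} discrete and aim to produce an idempotent with a finite trace on $E({\mathscr{C}}(p,q))$, which by Lemma~\ref{lemma-2.9} would force discreteness — the desired contradiction. The first step is to record the consequence of this assumption: since $\tau$ is not discrete, the contrapositive of Lemma~\ref{lemma-2.9} guarantees that \emph{every} idempotent $q^np^n$ has the property that each of its neighbourhoods meets $E({\mathscr{C}}(p,q))$ in an infinite set (no idempotent is isolated in $E({\mathscr{C}}(p,q))$). I shall also use throughout that $E({\mathscr{C}}(p,q))=\{q^np^n\colon n\in\omega\}$ is a chain under $\preccurlyeq$ with $q^ap^a\cdot q^bp^b=q^{\max\{a,b\}}p^{\max\{a,b\}}$, so that ${\uparrow_{\preccurlyeq}}q^ip^i$ is finite for each $i$.

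The point where semiregularity enters is to manufacture a genuine Hausdorff-type separation of a comparable pair of idempotents — something the $T_1$ axiom alone cannot provide. Using the regular-open basis at $q^ip^i$ together with the $T_1$ axiom, I pick any idempotent $q^bp^b\neq q^ip^i$ and choose a basic regular open set $U$ with $q^ip^i\in U\subseteq E({\mathscr{C}}(p,q))\setminus\{q^bp^b\}$; then $U\neq E({\mathscr{C}}(p,q))$, and since $U=\operatorname{int}_{E}(\operatorname{cl}_{E}(U))$ this forces $\operatorname{cl}_{E}(U)\neq E({\mathscr{C}}(p,q))$. Consequently $G:=E({\mathscr{C}}(p,q))\setminus\operatorname{cl}_{E}(U)$ is a nonempty open subset of $E({\mathscr{C}}(p,q))$ disjoint from $U$, and any idempotent $q^cp^c\in G$ is separated from $q^ip^i$ by the disjoint open sets $U$ and $G$. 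I stress that only semiregularity \emph{at the single point} $q^ip^i$ is needed here.

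The heart of the proof is to turn such a separated pair into a contradiction using only the \emph{separate} continuity of the multiplication; this is the step I expect to be the main obstacle, because — unlike Proposition~\ref{proposition-2.11}, which leans on joint continuity — I may not assume the product is jointly continuous. Relabel the separated pair as $q^ap^a,q^bp^b$ with $a<b$, so that $q^bp^b$ is strictly below $q^ap^a$ and the two points have disjoint open traces $A\ni q^ap^a$ and $B\ni q^bp^b$ in $E({\mathscr{C}}(p,q))$. The trick that rescues separate continuity is to fix the \emph{lower} idempotent $q^bp^b$ as a left multiplier: since $q^bp^b\cdot q^ap^a=q^bp^b\in B$, continuity of the left translation $x\mapsto q^bp^b\cdot x$ yields a neighbourhood $M$ of $q^ap^a$ with $M\cap E({\mathscr{C}}(p,q))\subseteq A$ and $q^bp^b\cdot(M\cap E({\mathscr{C}}(p,q)))\subseteq B$. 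By the first step $M\cap E({\mathscr{C}}(p,q))$ is infinite, so it contains some $q^lp^l$ with $l\geq b$; for such an idempotent $q^bp^b\cdot q^lp^l=q^lp^l$, whence $q^lp^l\in B$, while simultaneously $q^lp^l\in A$ — contradicting $A\cap B=\emptyset$.

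This contradiction shows that the assumption ``$\tau$ is not discrete'' is untenable, so $\tau$ is discrete. I expect the only delicate bookkeeping to be the passage between the subspace topology of $E({\mathscr{C}}(p,q))$ and that of ${\mathscr{C}}(p,q)$ — the sets $A$ and $B$ need only be disjoint as sets of idempotents, whereas the translation is continuous on all of ${\mathscr{C}}(p,q)$ — together with the observation that fixing the strictly-lower idempotent as the multiplier is exactly what makes separate continuity sufficient: replacing it by the upper idempotent would give $q^ap^a\cdot q^bp^b=q^bp^b$ on the wrong side and collapse the products that drive the contradiction.
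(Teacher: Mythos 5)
Your proof is correct, and it runs on the same fuel as the paper's: the contrapositive of Lemma~\ref{lemma-2.9} (if $\tau$ is non-discrete, every neighbourhood of every idempotent meets $E({\mathscr{C}}(p,q))$ in an infinite set), separate continuity of translation by a fixed idempotent, and the fact that an infinite set of idempotents must contain idempotents $q^lp^l$ with $l$ arbitrarily large, so that multiplication by a fixed $q^bp^b$ fixes them. The difference is in the organization, and it buys a genuine simplification. The paper proves the positive claim that \emph{every} regular open neighbourhood $U(q^ip^i)$ of $q^ip^i$ in $E({\mathscr{C}}(p,q))$ is dense in $E({\mathscr{C}}(p,q))$, and only at the very end cashes in regular openness ($U=\operatorname{int}(\operatorname{cl}(U))$ forces $U$ to be all of $E({\mathscr{C}}(p,q))$, contradicting $T_1$); since the regular open set is anchored at the fixed point $q^ip^i$, the density argument requires two cases ($j<i$ and $j>i$), applying separate continuity with the translation on a different side in each. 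You instead take the contrapositive route at the outset: semiregularity plus $T_1$ immediately yields a proper regular open $U\ni q^ip^i$, whose closure then cannot be dense, giving a pair of disjoint nonempty open sets in $E({\mathscr{C}}(p,q))$. From that moment semiregularity plays no further role, so you may relabel the pair freely and always translate by the \emph{lower} idempotent of the two; this symmetrization collapses the paper's two cases into one. Your bookkeeping between the subspace topology of $E({\mathscr{C}}(p,q))$ and the ambient topology is also handled correctly: the translated products of idempotents stay inside $E({\mathscr{C}}(p,q))$ because $E({\mathscr{C}}(p,q))$ is a commutative chain, so the trace argument closes as you claim.
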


\begin{proof}[\textsl{Proof}]
Suppose to the contrary that there exists an inverse semigroup non-discrete $T_1$-topology on ${\mathscr{C}}(p,q)$ such that he space $E({\mathscr{C}}(p,q))$  is semiregular at $q^ip^i$ for some idempotent $q^ip^i\in {\mathscr{C}}(p,q)$. We claim that $\operatorname{cl}_{E({\mathscr{C}}(p,q))}(U(q^ip^i))={\updownarrow_{\preccurlyeq}}q^ip^i$ for any regular open neighbourhood $U(q^ip^i)$ in $E({\mathscr{C}}(p,q))$ of the point $q^ip^i$.

\smallskip

Suppose to the contrary that there exists an idempotent $q^jp^j\in {\mathscr{C}}(p,q)$ such that
\begin{equation*}
q^jp^j\notin\operatorname{cl}_{E({\mathscr{C}}(p,q))}(U(q^ip^i)),
\end{equation*}
i.e., there exists an open neighbourhood $U(q^jp^j)$ of the point $q^jp^j$ in $E({\mathscr{C}}(p,q))$ such that $U(q^jp^j)\cap U(q^ip^i)=\varnothing$.
If the point  $q^jp^j$ has a finite neighbourhood, then by Lemma~\ref{lemma-2.9} the topology $\tau$ is discrete. Hence all open neighbourhoods of the point $q^jp^j$ are infinite in $E({\mathscr{C}}(p,q))$. If $j<i$ then $q^ip^i\cdot q^jp^j=q^ip^i$. The separate continuity of the semigroup operation in $\left({\mathscr{C}}(p,q),\tau\right)$ implies that for a regular open neighbourhood $U(q^ip^i)$ of $q^ip^i$ in $E({\mathscr{C}}(p,q))$ there exists an open neighbourhood $V(q^jp^j)\subseteq U(q^jp^j)$ of $q^jp^j$ in $E({\mathscr{C}}(p,q))$ such that
\begin{equation*}
  V(q^jp^j)\cdot q^ip^i\subseteq U(q^ip^i).
\end{equation*}
By the definition of the bicyclic semigroup ${\mathscr{C}}(p,q)$ the neighbourhood $V(q^jp^j)$ contains infinitely many idempotents $q^kp^k$, $k\in\omega$, such that $q^ip^i\cdot q^kp^k=q^kp^k$. Since $V(q^jp^j)\cap U(q^ip^i)=\varnothing$, this contradicts the inclusion
$
  V(q^jp^j)\cdot q^ip^i\subseteq U(q^ip^i).
$
If $j>i$ then $q^ip^i\cdot q^jp^j=q^jp^j$. The separate continuity of the semigroup operation in $\left({\mathscr{C}}(p,q),\tau\right)$ implies that for an open neighbourhood $U(q^jp^j)$ of $q^jp^j$ in $E({\mathscr{C}}(p,q))$ there exists a regular open neighbourhood $V(q^ip^i)\subseteq U(q^ip^i)$ of $q^ip^i$ in $E({\mathscr{C}}(p,q))$ such that $V(q^ip^i)\cdot q^jp^j\subseteq U(q^jp^j)$. Again, by the definition of the bicyclic semigroup ${\mathscr{C}}(p,q)$ the neighbourhood $V(q^ip^i)$ contains infinitely many idempotents $q^kp^k$, $k\in\omega$, such that $q^jp^j\cdot q^kp^k=q^kp^k$. Similar as in previous case we obtain a contradiction.

\smallskip

The obtained contradictions imply that
\begin{equation*}
\operatorname{cl}_{E({\mathscr{C}}(p,q))}(U(q^ip^i))={\updownarrow_{\preccurlyeq}}q^ip^i
\end{equation*}
for any regular open neighbourhood $U(q^ip^i)$ in $E({\mathscr{C}}(p,q))$ of the point $q^ip^i$. This equality contradicts the assumption that $\left({\mathscr{C}}(p,q),\tau\right)$ is a $T_1$-space. Hence $\tau$ is the discrete topology on the bicyclic monoid ${\mathscr{C}}(p,q)$.
\end{proof}

Lemma~\ref{lemma-2.13} and Proposition~\ref{proposition-2.14} imply the following theorem.

\begin{theorem}\label{theorem-2.15}
Let $\tau$ be a shift-continuous $T_1$-topology on the bicyclic monoid ${\mathscr{C}}(p,q)$ such that the maps ${\mathscr{C}}(p,q)\to E({\mathscr{C}}(p,q))$, $x\mapsto xx^{-1}$ and ${\mathscr{C}}(p,q)\to E({\mathscr{C}}(p,q))$, $x\mapsto x^{-1}x$ are continuous. If there exists a point $q^ip^j\in {\mathscr{C}}(p,q)$ such that the space ${\updownarrow_{\preccurlyeq}}q^ip^j$ is  semiregular  at $q^ip^j$, then $\tau$ is discrete.
\end{theorem}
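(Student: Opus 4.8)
The plan is to combine Lemma~\ref{lemma-2.13} and Proposition~\ref{proposition-2.14}, the bridge between them being the observation that for an idempotent the set ${\updownarrow_{\preccurlyeq}}q^ap^a$ is nothing but the whole idempotent band $E({\mathscr{C}}(p,q))$.

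First I would invoke Lemma~\ref{lemma-2.13}: since $\tau$ is a shift-continuous $T_1$-topology and by hypothesis there is a point $q^ip^j$ at which ${\updownarrow_{\preccurlyeq}}q^ip^j$ is semiregular, the lemma yields that ${\updownarrow_{\preccurlyeq}}q^mp^n$ is semiregular at $q^mp^n$ for every point $q^mp^n\in{\mathscr{C}}(p,q)$. In particular this holds at every idempotent; fix one idempotent, say $q^ap^a$, so that ${\updownarrow_{\preccurlyeq}}q^ap^a$ is semiregular at $q^ap^a$.

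Next comes the key identification. By Lemma~\ref{lemma-2.3}, an element $q^sp^t$ is comparable with $q^ap^a$ in the natural partial order precisely when $s-t=a-a=0$, i.e.\ when $q^sp^t$ is itself an idempotent $q^sp^s$; conversely every idempotent is comparable with $q^ap^a$ because the idempotents form a chain. Hence, as subspaces of $({\mathscr{C}}(p,q),\tau)$,
\begin{equation*}
{\updownarrow_{\preccurlyeq}}q^ap^a=E({\mathscr{C}}(p,q)).
\end{equation*}
Consequently the semiregularity of ${\updownarrow_{\preccurlyeq}}q^ap^a$ at $q^ap^a$ is exactly the semiregularity of the space $E({\mathscr{C}}(p,q))$ at the idempotent $q^ap^a$.

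Finally I would apply Proposition~\ref{proposition-2.14}. Its hypotheses on the continuity of the maps $x\mapsto xx^{-1}$ and $x\mapsto x^{-1}x$ are part of the assumptions of the present theorem, and we have just produced an idempotent at which $E({\mathscr{C}}(p,q))$ is semiregular; therefore $\tau$ is discrete. Since the argument is a direct assembly of two already-proved results, the only genuinely substantive point, and the step I would be most careful about, is the subspace identity ${\updownarrow_{\preccurlyeq}}q^ap^a=E({\mathscr{C}}(p,q))$, which must be read off correctly from Lemma~\ref{lemma-2.3} so that Lemma~\ref{lemma-2.13} really feeds semiregularity into the hypothesis of Proposition~\ref{proposition-2.14}.
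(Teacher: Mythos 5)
Your proposal is correct and takes essentially the same route as the paper, which derives Theorem~\ref{theorem-2.15} exactly by combining Lemma~\ref{lemma-2.13} with Proposition~\ref{proposition-2.14}. Your bridging identification ${\updownarrow_{\preccurlyeq}}q^ap^a=E({\mathscr{C}}(p,q))$ for an idempotent $q^ap^a$, read off from Lemma~\ref{lemma-2.3}, is precisely the (unstated) step that makes the paper's one-line deduction work.
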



\section*{\textbf{Acknowledgements}}

The authors acknowledge Taras Banakh and the referee for his/her comments and suggestions.

\end{document}